\newtheorem{theorem}{Theorem}
\newtheorem{lemma}[theorem]{Lemma}
\newtheorem{remark}[theorem]{Remark}
\newenvironment{proof}[1][Proof]{\textbf{#1.} }{\ \rule{0.5em}{0.5em}}
\title{The Dirichlet-to-Neumann map in a disk with a one-step radial potential. An analytical and numerical study}
\author[1]{Juan A. Barcel\'{o}}
\author[1]{Carlos Castro}
\author[1]{Sagrario Lantar\'{o}n}
\author[1]{Susana Merch\'{a}n}
\affil[1]{Departamento de Matem\'{a}tica e Inform\'{a}tica Aplicadas a la Ingeniería Civil y Naval,  Universidad Polit\'{e}cnica de Madrid,  28040 Madrid, Spain.}
\date{}
\begin{document} 
\maketitle
\begin{abstract} 
We consider the Schr\"odinger operator with a potential $q$ on a disk and the map that associates to $q$ the corresponding Dirichlet to Neumann (DtN) map. We give some numerical and analytical results on the range of this map and its stability, for the particular class of one-step radial potentials. 
\end{abstract}

\section{Introduction}

Let $\Omega \subset \mathbb{R}^2$  be a bounded domain with smooth boundary $ \partial  \Omega$. For each $q\in L^\infty(\Omega)$, consider the so called Dirichlet to Neumann map (DtN) given by 
\begin{equation} \label{eq:Lambda}
\begin{array}{rcl}
\Lambda_q : H^{1/2}(\partial \Omega) &\to& H^{-1/2}(\partial \Omega)\\
f& \to &\frac{\partial u}{\partial n}|_{\partial \Omega}.
\end{array}
\end{equation}
where $u$ is the solution of the following problem
\begin{equation}\label{problemi}
\left\{  \begin{array}{ll}
\Delta u + q(x) u=0, \hspace{0.5cm} x \in \Omega,
\\ u= f, \hspace{2.2cm} \partial \Omega,
\end{array}     \right. 
\end{equation}
and $\frac{\partial u}{\partial n}|_{\partial \Omega}$ denotes the normal derivative of $u$ on the boundary $\partial \Omega$.

Note that the uniqueness of $u$ as solution of (\ref{problemi}) requires that $0$ is not a Dirichlet eigenvalue of $\Delta + q$. A sufficient condition to guarantee that $\Lambda_q$ is well-defined is to assume $q(x)<\lambda_1$, the first Dirichlet eigenvalue of the Laplace operator in $\Omega$, since in this case, the solution in (\ref{problemi}) is unique. We assume that this condition holds and let us define the space
$$
L^\infty_{< \lambda_1}(\Omega) = \{q\in L^\infty (\Omega), \mbox{ s. t. } q(x) < \lambda_1, \mbox{ a. e. } \} .
$$

In this work we are interested in the following  map 
\begin{equation} \label{eq_mapl}
\begin{array}{rcl}
\Lambda: L^\infty_{<\lambda_1} (\Omega) & \to & \mathcal{L}(H^{1/2}(\partial \Omega);H^{-1/2}(\partial \Omega))\\
q & \to & \Lambda_q.
\end{array}
\end{equation}
This has an important role in inverse problems where the aim is to recover the potential $q$ from boundary measurements. In practice, these boundary measurements correspond to the associated DtN map and therefore, the mathematical statement of the classical inverse problem consists in the inversion of $\Lambda$. 

It is known that $\Lambda$ is one to one as long as $q\in L^p$ with $p>2$ (see \cite{BIY}, [Bl\.{a}sten, Imanuvilov and Yamamoto 2015]). Therefore, the inverse map $\Lambda^{-1}$ can be defined in the range of $\Lambda$. There are, however, two related important and difficult questions that are not well understood: a characterization of the range of $\Lambda$ and its stability i.e., a quantification of the difference of two potentials, in the $L^\infty$ topology in terms of the distance of their associated DtN maps. Obviously, this stability will affect to the efficiency of any inversion or reconstruction algorithm to recover the potential from the DtN map (see \cite{T1}, [Tejero 2016] and \cite{T2}, [Tejero 2018]). 

The first question, i.e. the characterization of the range of $\Lambda$ is widely open. To our knowledge, the further result is due to Ingerman in \cite{I} [Ingerman 2000], where a difficult characterization is obtained for the adherence with respect to a certain topology.  
Concerning the stability, there are some results when we assume that the potential $q$ has some smoothness. In particular, if $q\in H^{s}(\Omega)$ with $s>0$, the following 
$\log-$stability condition is known (see \cite{BIY}, [Bl\.{a}sten, Imanuvilov and Yamamoto 2015]),
\begin{equation} \label{eq_cal}
\| q_1-q_2\|_{L^\infty} \leq V( \| \Lambda_{q_1}-\Lambda_{q_2} \|_{\mathcal{L}(H^{1/2};H^{-1/2})}) ,
\end{equation}
where $V(t)=C\log(1/t)^{-\alpha}$ for some constants $C,\alpha>0$. 
Stronger stability conditions are known in some particular cases. For example, in \cite{BHQ}, [Beretta, De Hoop and Qiu 2013] it is shown that when $q$ is piecewise constant and all the components where it takes a constant value touch the boundary, the stability is Lipschitz, i.e. there exists a constant $C>0$ such that 
$$
\| q_1-q_2\|_{L^\infty} \leq 
C \| \Lambda_{q_1}-\Lambda_{q_2} \|_{\mathcal{L}(H^{1/2};H^{-1/2})} . 
$$ 

In this work we try to understand better the situation by considering the simplest case of a disk with one-step radial potentials $q$.  More precisely, we give some results on the range of $\Lambda$ and its stability when we restrict to the particular case $\Omega=B(0,1)=\left\{ x \in \mathbb{R}^2: \; r=|x| <1 \right\} $ and  
$
q\in F\subset L^\infty(\Omega)$  
given by
\begin{equation} \label{def:F}
F=\{ q\in L^\infty(\Omega) \; : \; q(r)=\gamma \chi_{(0,b)}(r), \; r=|x|, \; b\in(0,1), \; \gamma \in [0,1] \},
\end{equation}
where $\chi_{(0,b)}(r)$ is the characteristic function of the interval $(0,b)$. Note that $F$ is a two-parametric family depending on $\gamma$ and $b$. 

It is worth mentioning that, as we show below, the solution of (\ref{problemi}) is unique for all $b\in(0,1)$ and $\gamma \geq 0$, and therefore the DtN map is well defined for all these one step potentials. However, we restrict ourselves to the bounded set $F$ to simplify.  

Even in this simple case, a complete analytic answer to the previous questions is unknown. Therefore, we have considered a numerical approach based on a discrete sampling of the set $F$. Given an integer $N>0$ we define $h=1/N$ and
\begin{eqnarray} \nonumber  
F_h&=&\{ q\in L^\infty(\Omega) \; : \; q(r)=\gamma \chi_{(0,b)}(r), \; b=hi, \; \gamma= h_j , \\ && \qquad  i=1,...,N-1, \; j=0,...,N \} . \label{def:Fh}
\end{eqnarray}  
Note that $F_h$ has $N(N-1)+1$ functions from $F$. As $h\to 0$ we obtain a better description of $F$ and, in particular, we should recover the stability properties for $q\in F$. 

Concerning the stability of $\Lambda$, we show that it fails in the sense that inequality (\ref{eq_cal}) does not hold for any continuous function $V(t)$ with $V(0)=0$. The proof relies on the ideas in \cite{A} [Alessandrini 1988] where the analogous result is obtained for the conductivity problem. 

We also obtain some partial stability results when $b$ and $\gamma$ are fixed. To state them we define the subsets $F_b\subset F$, for $b\in(0,1)$, by
\begin{equation} \label{def_Fb}
F_b=\{ q\in L^\infty(\Omega) \; : \; q(r)=\gamma \chi_{(0,b)}(r), \; \gamma \in[0,1] \}, 
\end{equation}
and $G_\gamma\subset F$, for $\gamma\in [0,1]$, by 
\begin{equation} \label{def_Gg} 
G_\gamma=\{ q\in L^\infty(\Omega) \; : \; q(r)=\gamma \chi_{(0,b)}(r), \; b \in(0,1) \}. 
\end{equation}
We prove that, for fixed $b_0>0$, Lipschitz stability holds, if we restrict ourselves to $F_{b_0}$. Therefore, the lack of stability is due to the change of $b_0$ rather than to changes in $\gamma$. Concerning $G_\gamma$, we prove that if $b\geq b_0>0$
there is stability of the DtN map with respect to $b$ for potentials in $G_\gamma$. This suggests a possible stability with respect to the $L^1$-norm of the potentials which is sensitive to the position of the discontinuities, when consider discontinuous functions.  In fact, we show numerical evidences of such stability when considering potentials in $F$.  

For the range of $\Lambda$ we give a characterization in terms of the first two eigenvalues of the DtN map. We also analyze the region where the stability constant is larger and, therefore, the potentials for which any recovering algorithm for $q$ from the DtN map will have more difficulties.  

It is worth mentioning that the results in this paper cannot be easily translated into the closely related, and more classical, conductivity problem where (\ref{problemi}) is replaced by 
\begin{equation}\label{ecuacion_divergencia}
\left\{  \begin{array}{ll}
-\mbox{ div } a(x) \nabla v =0, \hspace{0.5cm} x \in \Omega,
\\ v= f, \hspace{2.2cm} \partial \Omega,
\end{array}     \right. 
\end{equation}
and the Dirichlet to Neumann map, or voltage to current map, is given by
\begin{equation} \label{eq:Lambda_1}
\begin{array}{rcl}
\Lambda_a : H^{1/2}(\partial \Omega) &\to& H^{-1/2}(\partial \Omega)\\
f& \to &a\frac{\partial v}{\partial n}|_{\partial \Omega}.
\end{array}
\end{equation}
We refer to the review paper \cite{Ul} [Uhlmann 2014] and the references therein for theoretical results on the DtN map in this case.  

The rest of this paper is divided as follows: In section 2 below we characterize the DtN map in terms of its eigenvalues  using polar coordinates, in section 3 and 4 we analyze the stability and range results respectively. In section 5 we briefly describe the main conclusions and finally section 5 contains the proofs of the theorems stated in the previous sections.

\section{The Dirichlet to Neumann map}\label{sec1}

In this section we characterize the Dirichlet to Neumann map in the case of a disk. System (\ref{problemi}) in polar coordinates reads
\begin{equation}\label{problema1}
\left\{   \begin{array}{lll}
r^2 \frac{\partial ^2 v}{\partial r^2} +r \frac{\partial v}{\partial r}+ \frac{\partial^2 v}{\partial \theta^2}+r^2q(r)v=0, \hspace{0.3cm}  (r, \theta) \in (0,1) \times [0, 2 \pi), \\  \lim_{r \longrightarrow 0, r>0}v(r, \theta) < \infty, \\ v(1, \theta)=g(\theta), \hspace{0.5cm} \theta \in [0,2 \pi),
\end{array}  \right.
\end{equation}
where $v(r, \theta )= u(r \cos \theta , r \sin \theta) \; \textrm{ and } g(\theta)=f(\cos \theta, \sin \theta)$ is a periodic function.

An orthonormal basis in $L^2(0,2\pi)$ is given by $\{e^{in\theta}\}_{n\in \mathbb{Z}}$. Here we use this complex basis to simplify the notation but in the analysis below we only consider the subspace of real valued functions.  Therefore, any function $g\in L^2(0,2\pi)$ can be written as
\begin{equation} \label{eq:gt}
g(\theta)=\sum_{n \in \mathbb{Z}} g_n  e^{in\theta}, \hspace{0.5cm} g_n=\frac{1}{2\pi}\int_0^{2 \pi} g(t)e^{-int}dt,
\end{equation}
and $\| g \|_{L^2(0,2\pi)}^2 = \sum_{n\in \mathbb{Z}} |g_n|^2$. Associated to this basis we define the usual Hilbert spaces $H^\alpha_{\#}$, for $\alpha >0$, as 
$$
H^\alpha_{\#}=\{ g \; : \; \| g\|_\alpha^2 = \sum_{n\in\mathbb{Z}}(1+n^2)^{\alpha} |g_n|^2 <\infty \}.
$$

The Dirichlet to Neumann map in this case is defined as 
\begin{equation} \label{eq:Lambda_r}
\begin{array}{rcl}
\Lambda_q : H^{1/2}_{\#}(0,2\pi) &\to& H^{-1/2}_{\#}(2\pi)\\
g& \to &\frac{\partial v}{\partial r}(1,\cdot),
\end{array}
\end{equation}
where $v$ is the unique solution of  (\ref{problema1}).

In the above basis the Dirichlet to Neumann map turns out to be diagonal. In fact, we have the following result:

\begin{theorem} \label{th1}
Let $\Omega$ be the unit disk and $q\in F$. Then,
\begin{eqnarray} \label{eq:Dtn}
\Lambda_q\left(e^{in\theta}\right)&=&c_n e^{in\theta}, \quad n\in \mathbb{Z},
\end{eqnarray}
where 
\begin{eqnarray}
\label{c0}
c_0&=&\frac{- b\sqrt{\gamma}J_1(\sqrt{\gamma}b)}{b\log b\sqrt{\gamma}J_1(\sqrt{\gamma}b)+J_0(\sqrt{\gamma}b)}, \\ \label{c0_b}
c_{n}&=&c_{-n}=n\frac{ J_{n-1}(\sqrt{\gamma}b)-b^{2n}J_{n+1}(\sqrt{\gamma}b) }{J_{n-1}(\sqrt{\gamma}b)+ b^{2n}J_{n+1}(\sqrt{\gamma}b)   }, \quad n\in \mathbb{N},
\end{eqnarray}
and $J_n(r)$ are the Bessel functions of first kind. 
\end{theorem}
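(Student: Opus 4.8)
The plan is to exploit the radial symmetry of $q$ together with the diagonalizing property of the angular basis $\{e^{in\theta}\}$. Since the coefficients of the operator in \eqref{problema1} depend only on $r$, I would look for the solution corresponding to the boundary datum $g(\theta)=e^{in\theta}$ in the separated form $v(r,\theta)=R_n(r)\,e^{in\theta}$. Substituting into \eqref{problema1} and cancelling the common factor $e^{in\theta}$ reduces the PDE to the ordinary differential equation
\begin{equation*}
r^2 R_n'' + r R_n' + \big(\gamma r^2\chi_{(0,b)}(r) - n^2\big)R_n = 0,
\end{equation*}
to be solved on $(0,1)$ with boundedness at $r=0$ and $R_n(1)=1$. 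The function $R_n(r)e^{in\theta}$ is then the unique solution of \eqref{problema1} for this datum (uniqueness holds since $\gamma\in[0,1]$), so by definition \eqref{eq:Lambda_r} we get $\Lambda_q(e^{in\theta})=R_n'(1)e^{in\theta}$, i.e. $c_n=R_n'(1)$; this already gives the diagonal form \eqref{eq:Dtn}, and $c_{-n}=c_n$ is automatic because the equation depends on $n$ only through $n^2$.

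The core of the argument is to solve this ODE explicitly in the two regions determined by the jump of $q$ and then glue the pieces. On the inner interval $(0,b)$, where $q=\gamma$, the change of variable $s=\sqrt{\gamma}\,r$ turns the equation into Bessel's equation of order $n$; discarding the solution $Y_n$ that is singular at the origin leaves $R_n(r)=A\,J_n(\sqrt{\gamma}\,r)$. On the outer interval $(b,1)$, where $q=0$, the equation is of Euler type, with general solution $R_n(r)=B\,r^n+C\,r^{-n}$ for $n\ge 1$ and $R_n(r)=B+C\log r$ for $n=0$. I would then impose that $R_n$ and $R_n'$ be continuous at $r=b$ --- the natural transmission conditions for the Schr\"odinger equation with a bounded, piecewise-constant potential --- together with the normalization $R_n(1)=1$, giving three linear equations for the three constants $A,B,C$.

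Finally I would extract $c_n=R_n'(1)=n(B-C)$ (and $c_0=C$ in the radial case, since $R_0'(1)=C$) by eliminating $A$. The step that requires care is the simplification of the two matching equations at $r=b$: forming their sum and difference produces, after factoring out $\sqrt{\gamma}$, the combinations $\tfrac{n}{z}J_n(z)\pm J_n'(z)$ with $z=\sqrt{\gamma}\,b$, and here the Bessel recurrences $J_{n-1}(z)=J_n'(z)+\tfrac{n}{z}J_n(z)$ and $J_{n+1}(z)=\tfrac{n}{z}J_n(z)-J_n'(z)$ collapse them to $J_{n-1}(z)$ and $J_{n+1}(z)$ respectively. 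This yields $B\propto b^{1-n}J_{n-1}(z)$ and $C\propto b^{n+1}J_{n+1}(z)$ with the same proportionality factor, so the unknown $A$ cancels in the ratio $c_n=n(B-C)/(B+C)$; multiplying numerator and denominator by $b^{n-1}$ then gives \eqref{c0_b}. The case $n=0$ is handled in the same way, using $J_0'=-J_1$ in the derivative matching and $R_0(1)=B=1$ in the normalization, which after eliminating $A$ produces \eqref{c0} directly. The main obstacle is thus not conceptual but book-keeping: arranging the powers of $b$ and the Bessel indices so that $A,B,C$ cancel cleanly, for which the recurrence relations are the essential tool.
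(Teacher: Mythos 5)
Your proposal is correct and follows essentially the same route as the paper: separation of variables $v=R_n(r)e^{in\theta}$, the Bessel solution $A J_n(\sqrt{\gamma}r)$ on $(0,b)$ and the Euler/logarithmic solution on $(b,1)$, continuity of $R_n$ and $R_n'$ at $r=b$, and the Bessel recurrences to collapse $\tfrac{n}{z}J_n \pm J_n'$ into $J_{n\mp1}$. The only differences are cosmetic (your parametrization $Br^n+Cr^{-n}$ versus the paper's $C_n(r^n-r^{-n})+r^n$, and using the two one-sided recurrences instead of $2J_n'=J_{n-1}-J_{n+1}$), and your computations check out against \eqref{c0} and \eqref{c0_b}.
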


Note that the range of $\Lambda$, when restricted to $F$, is characterized by the set of sequences $\{ c_n \}_{n\geq 0}$ of the form (\ref{c0})-(\ref{c0_b}) for all possible $b,\gamma$. In particular, when $q=0$ we have 
\begin{equation}\label{potencial_nulo}
c_{n}=n, \hspace{0.4cm} n=0,1,2, \cdot \cdot \cdot, 
\end{equation}
 and this sequence must be in the range of $\Lambda$.

The norm of $\Lambda_q$, when restricted to $F$, is given by 
\begin{equation} \label{eq:norm_l}
\|\Lambda_q \|_{\mathcal{L}(H^{1/2}_{\#}; H^{-1/2}_{\#})}=\sup_{n\geq 0} \frac{|c_n|}{1+n}.
\end{equation}

\begin{proof} (of Theorem \ref{th1})
We first compute $c_0$ in (\ref{eq:Dtn}). As the boundary data at $r=1$  in (\ref{problema1}) is the constant $g(\theta)=1$, we assume that $v(r,\theta)$ is radial, i.e. $v(r,\theta)=a_0(r)$. Then, $a_0$ should satisfy
 \begin{equation}\label{problema_0}
\left\{   \begin{array}{ll}
r^2a_0''+ra_0'+r^2q(r)a_0=0, \hspace{0.5cm} 0<r<1, \\ a_0(1)=1, \hspace{0.5cm} \lim_{r \longrightarrow 0, r>0}a_0(r)< \infty.
\end{array}     \right.
\end{equation}
For $r\in (0,b)$ we solve the ODE with the boundary data at $r=0$, while for $r\in(b,1)$ we use the boundary data at $r=1$.  In the first case the ODE is the Bessel ODE or orden 0 and therefore we have
$$
a_0(r)= \left\{
\begin{array}{ll}
A_0J_0(\sqrt{\gamma} r), & r\in (0,b), \\
1+C_0 \log r, & r\in (b,1),
\end{array}
\right.
$$
where $J_0$ is the Bessel function of the first kind and $A_0$, $C_0$ are constants.  These are computed by imposing continuity of $a_0$ and $a_0'$ at $r=b$. In this way, we obtain
$$
\left\{
\begin{array}{ll}
A_0J_0(\sqrt{\gamma} b) = 1+C_0 \log b \\
A_0\sqrt{\gamma} J_0'(\sqrt{\gamma} b) = C_0 \frac1{b}.
\end{array}
\right.
$$ 
Solving the system for $A_0$ and $C_0$ and taking into account that $\Lambda_q(1)=\frac{\partial v}{\partial r}(1,\theta)=a_0'(1)=C_0$ we easily obtain (\ref{eq:Dtn}).

Similarly, to compute $c_n$ in (\ref{eq:Dtn})  we have to consider $g(\theta)=e^{in \theta}$ in (\ref{problema1}) and therefore we assume that the solution $v(r,\theta)$  can be written in separate variables, i.e. $v(r,\theta)=a_n(r)e^{in\theta}$. Then, $a_n$ must satisfy
\begin{equation}\label{problema_coseno}
\left\{   \begin{array}{ll}
r^2a_n''+ra_n'+\left(r^2q(r)-n^2 \right)a_n=0, \hspace{0.5cm} 0<r<1, \\ a_n(1)=1, \hspace{0.5cm} \lim_{r \longrightarrow 0, r>0}a_n(r)< \infty , \quad n\geq 1.
\end{array}     \right.
\end{equation}
 As in the case of $c_0$, for $r\in (0,b)$ we solve the ODE with the boundary data at $r=0$, while for $r\in(b,1)$ we use the boundary data at $r=1$. We have
$$
a_n(r)= \left\{
\begin{array}{ll}
A_nJ_n(\sqrt{\gamma} r), & r\in (0,b), \\
C_n (r^n-r^{-n})+r^n, & r\in (b,1),
\end{array}
\right.
$$
where $A_n$, $C_n$ are constants.  These are computed by imposing continuity of $a_n$ and $a_n'$ at $r=b$. In this way, we obtain
$$
\left\{
\begin{array}{ll}
A_nJ_n(\sqrt{\gamma} b) = C_n (b^n-b^{-n}) +b^n \\
A_n\sqrt{\gamma} J_n'(\sqrt{\gamma} b) = n C_n (b^{n-1}+b^{-n-1})+nb^{n-1}.
\end{array}
\right.
$$ 
Solving the system for $A_n$ and $C_n$ we obtain in particular
$$
C_n=\frac{-b^nJ'_n(\sqrt{\gamma}b) +n\frac{b^{n-1}}{\sqrt{\gamma}}J_n(\sqrt{\gamma}b) }{-(b^{-n-1}+b^{n-1}) \frac{n}{\sqrt{\gamma}}J_n(\sqrt{\gamma}b) -(b^{-n}-b^n)J'_n(\sqrt{\gamma}b)} .
$$
We simplify this expression using the well known identity
$$
2J'_n(r)=J_{n-1}(r)-J_{n+1}(r),
$$
and we obtain, 
$$
C_n=\frac{- J_{n+1}(\sqrt{\gamma}b) }{b^{-2n}J_{n-1}(\sqrt{\gamma}b) +J_{n+1}(\sqrt{\gamma}b)}. 
$$
Now, taking into account that $\Lambda_q(e^{in\theta})=\frac{\partial v}{\partial r}(1,\theta)=a_n'(1)e^{in \theta}=(2nC_n+n)e^{in\theta}$ we easily obtain (\ref{eq:Dtn}).
\end{proof}
\begin{remark}
In this proof of Theorem \ref{th1} we do not use the restriction $\gamma\leq 1$ that satisfy the potentials in $F$. In fact, the statement of the theorem still holds for any step potential, as in $F$, but with any arbitrary large $\gamma \geq 0$. 
\end{remark}

\section{Stability}

In this section we focus on the stability results for the map $\Lambda$. Some results are analytical and they are stated as theorems. The proofs are given in the appendix below.  We divide this section in three subsections where we consider the negative stability result for $q\in F$ norm  and some partial results when we consider the subsets $F_b$ and $G_\gamma$ defined in (\ref{def_Fb}) and (\ref{def_Gg}).     

\subsection{Stability for $q\in F$}

The first result in this section is the lack of any stability property when $q\in F$. In particular, we prove that inequality  (\ref{eq_cal}) fails, for any continuous function $V(t)$ with $V(0)=0$. 

\begin{theorem} \label{th_st1}
Given $q_0\in F$, there exists a sequence $\{q_k\}_{k\geq 1}\subset F$ such that $\|q_0-q_k\|_{L^{\infty}}=\gamma >0$ for all $k\geq 1$, while 
\begin{equation}\label{C2b}
\| \Lambda_{q_0}-\Lambda_{q_k}\|_{\mathcal{L}(H^{1/2}_{\#}; H^{-1/2}_{\#})} \to 0 , \quad \mbox{ as $k\to \infty$}.
\end{equation}
\end{theorem}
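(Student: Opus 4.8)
The plan is to reduce the operator-norm convergence (\ref{C2b}) to a statement about the eigenvalue sequences furnished by Theorem \ref{th1}, and then to exhibit an explicit sequence and prove a uniform-in-$n$ decay estimate for the differences of eigenvalues. Since both $\Lambda_{q_0}$ and $\Lambda_{q_k}$ are diagonal in $\{e^{in\theta}\}$, so is their difference, with eigenvalues $c_n(q_0)-c_n(q_k)$; hence, by (\ref{eq:norm_l}) applied to this difference,
\[
\|\Lambda_{q_0}-\Lambda_{q_k}\|_{\mathcal{L}(H^{1/2}_{\#};H^{-1/2}_{\#})}=\sup_{n\ge 0}\frac{|c_n(q_0)-c_n(q_k)|}{1+n},
\]
so that (\ref{C2b}) is equivalent to showing that this supremum tends to $0$.

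Write $q_0=\gamma_0\chi_{(0,b_0)}$ with $\gamma_0\in[0,1]$, $b_0\in(0,1)$. I would choose the sequence so that its height is held constant while only the discontinuity radius moves, keeping the $L^\infty$ distance pinned at a fixed positive value. If $\gamma_0>0$, set $q_k=\gamma_0\chi_{(0,b_k)}$ with $0<b_k<b_0$ and $b_k\uparrow b_0$; then $q_0-q_k=\gamma_0$ on $(b_k,b_0)$ and vanishes elsewhere, so $\|q_0-q_k\|_{L^\infty}=\gamma_0>0$ for every $k$. If $\gamma_0=0$, so that $q_0\equiv 0$, set instead $q_k=\gamma\chi_{(0,b_k)}$ for a fixed $\gamma\in(0,1]$ and $b_k\downarrow 0$; then $\|q_0-q_k\|_{L^\infty}=\gamma>0$. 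In either case the height is fixed, $b_k$ converges to some $b_*\in[0,b_0]$, and the $L^\infty$ distance equals a fixed $\gamma>0$, so the content of the theorem is entirely in the DtN convergence. This second, shrinking-support, case is the direct analogue of Alessandrini's construction: a perturbation concentrated near the center, far from the boundary, is asymptotically invisible to the DtN map.

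For the convergence I would split the supremum at a large index $M$. The key point is that $\sqrt{\gamma}\,b_k\le 1$ lies below the first positive zero of every $J_m$, so all the Bessel values in (\ref{c0})--(\ref{c0_b}) are positive and no denominator vanishes. For $n\ge 1$ I would rewrite $c_n=n\frac{1-\rho_n}{1+\rho_n}$ with $\rho_n(b)=b^{2n}J_{n+1}(\sqrt{\gamma}\,b)/J_{n-1}(\sqrt{\gamma}\,b)\ge 0$, whence $\frac{|c_n-n|}{1+n}\le 2\rho_n$. The series expansion $J_m(z)=\frac{(z/2)^m}{m!}(1+O(z^2))$, valid uniformly in $m$ for $z\le 1$, then yields $\rho_n(b)\le C\,b^{2n}/n^2$ uniformly for $b$ in a compact subset of $(0,1)$ containing all $b_k$ and $b_*$. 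Consequently $\sup_{n>M}\frac{|c_n(q_0)-c_n(q_k)|}{1+n}<\varepsilon/2$ once $M$ is large, independently of $k$. For the finitely many indices $0\le n\le M$, each $c_n$ is continuous in $b$, with limit $c_n(b_*)=c_n(q_0)$---equal to the zero-potential value $n$ for $n\ge1$ and to $0$ for $n=0$ when $b_*=0$, by the small-argument Bessel asymptotics together with $b\log b\to 0$ in (\ref{c0})---so $\max_{0\le n\le M}\frac{|c_n(q_0)-c_n(q_k)|}{1+n}<\varepsilon/2$ for $k$ large. Combining the two ranges gives the supremum $<\varepsilon$, which is (\ref{C2b}).

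The main obstacle is exactly the last step: upgrading the easy pointwise convergence $c_n(q_k)\to c_n(q_0)$ to convergence in operator norm, which requires the uniform tail control $\rho_n\lesssim b^{2n}/n^2$. This is where the constraints $b<1$ and $\gamma\le 1$ are essential---they place $\sqrt{\gamma}\,b$ below the first Bessel zero and force the geometric decay in $n$---and it is the quantitative mechanism behind the instability: changing the location of the discontinuity, or shrinking the support to the center, costs a fixed amount in $L^\infty$ but only an arbitrarily small amount in the DtN norm.
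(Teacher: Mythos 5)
Your proposal is correct, and it uses the same counterexample as the paper: fix the height $\gamma$ and move the radius of discontinuity, so that the $L^\infty$ distance stays pinned at $\gamma$ while the DtN map varies continuously (including the same treatment of the degenerate case $q_0\equiv 0$ by shrinking the support, which the paper relegates to a remark inside its proof). Where you genuinely diverge is in how the operator-norm convergence is established. The paper proves a \emph{quantitative} bound: using the mean value theorem together with the two-sided Bessel estimates (\ref{estimacion_inferior}) and the derivative bound (\ref{estimacion_derivada_bessel}), it shows $I_n\lesssim |b_k-b_0|/n$ uniformly in $n$, hence $\|\Lambda_{q_0}-\Lambda_{q_k}\|\lesssim |b_0-b_k|$, i.e.\ Lipschitz dependence of the DtN map on the discontinuity location (this is the same mechanism that later yields (\ref{estimacion_estabilidad_2})). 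You instead run a \emph{qualitative} $\varepsilon/2$-splitting: the tail $n>M$ is controlled uniformly in $k$ by $\rho_n=b^{2n}J_{n+1}(\sqrt{\gamma}b)/J_{n-1}(\sqrt{\gamma}b)\le \frac{1}{2n(n+1)}$ (which follows directly from (\ref{estimacion_inferior}) and $\sqrt{\gamma}b\le 1$, so you do not even need $b$ confined to a compact subset of $(0,1)$), and the finitely many low modes converge by continuity of each $c_n$ in $b$, the denominators being bounded away from zero as in (\ref{decreciente}). Your route is shorter and avoids the mean-value-theorem bookkeeping entirely, but it only delivers convergence without a rate; the paper's computation buys the explicit decay $O(1/k)$ in (\ref{C2bd}) and, more importantly, the Lipschitz-in-$b$ estimate that feeds into the positive stability result of Theorem \ref{th 0.1}. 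Both arguments are sound.
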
 

This result contradicts any possible stability result of the DtN map at $q_0\in F$. Roughly speaking the idea is that the eigenvalues of $\Lambda$, given in Theorem \ref{th1} above, depend continuously on $b$, unlike the $L^\infty$ norm of the potentials. A detailed proof of Theorem \ref{th_st1} is given in the appendix below. 

\subsection{Partial stability}

We give now two partial stability results when we fix $b$ and $\gamma$, respectively. 
\begin{theorem} \label{th 0.1}
Given $b\in(0,1)$ and $q_1,q_2\in F_b$, we have
\begin{equation}\label{estimacion_estabilidad}
 \|q_1-q_2 \|_{L^\infty} \leq \frac{(4,8765)^2}{b^4}     \| \Lambda_{q_1}-\Lambda_{q_2} \|_{\mathcal{L}(H^{1/2}_{\#}; H^{-1/2}_{\#})} .
\end{equation}
On the other hand, given $\gamma \in (0,1]$ and $q_1,q_2\in G_\gamma$, we have
\begin{equation}\label{estimacion_estabilidad_2}
\left| b_1-b_2  \right| \leq \frac{15}{2 \gamma b^3} \| \Lambda_{q_1}-\Lambda_{q_2} \|_{\mathcal{L}(H^{1/2}_{\#}; H^{-1/2}_{\#})},
\end{equation}
where $b=\min \{ b_1,b_2\}$.
\end{theorem}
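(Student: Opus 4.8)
The plan is to reduce both inequalities to a one-variable estimate on the single eigenvalue $c_1$. By Theorem~\ref{th1} the two DtN maps are simultaneously diagonal, so by the norm formula (\ref{eq:norm_l})
$$\|\Lambda_{q_1}-\Lambda_{q_2}\|_{\mathcal{L}(H^{1/2}_{\#};H^{-1/2}_{\#})}=\sup_{n\geq0}\frac{|c_n^{(1)}-c_n^{(2)}|}{1+n}\geq\frac{|c_1^{(1)}-c_1^{(2)}|}{2},$$
where $c_n^{(i)}$ is the $n$-th eigenvalue attached to $q_i$. Hence it suffices to bound the parameter difference from above by $|c_1^{(1)}-c_1^{(2)}|$. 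From (\ref{c0_b}),
$$c_1=\frac{J_0(s)-b^2J_2(s)}{J_0(s)+b^2J_2(s)},\qquad s=\sqrt{\gamma}\,b.$$
Since $0<s\le b<1<j_{0,1}$ (the first zero of $J_0$), we have $J_0(s)>0$ and $J_2(s)>0$ throughout, so the denominator never vanishes and $c_1$ is smooth in the parameters. For $F_b$ note $\|q_1-q_2\|_{L^\infty}=|\gamma_1-\gamma_2|$; for $G_\gamma$ the $L^\infty$ norm equals $\gamma$ and is insensitive to $b$, which is precisely why one must instead control $|b_1-b_2|$.

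For the first inequality I fix $b$ and vary $\gamma$. A direct differentiation, using $J_0'=-J_1$, $J_2'=J_1-\tfrac2sJ_2$ and $\partial s/\partial\gamma=b^2/(2s)$, collapses to
$$\frac{\partial c_1}{\partial\gamma}=\frac{-b^4\,W(s)}{s\,\bigl(J_0(s)+b^2J_2(s)\bigr)^2},\qquad W=J_0J_2'-J_0'J_2.$$
On $(0,1)$ one has $J_0>0$, $J_2>0$, $J_0'=-J_1<0$ and $J_2'=\tfrac12(J_1-J_3)>0$, so $W>0$ and $\partial_\gamma c_1$ keeps a constant (negative) sign; in particular it never vanishes. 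The mean value theorem gives $c_1^{(1)}-c_1^{(2)}=\partial_\gamma c_1(\xi)\,(\gamma_1-\gamma_2)$, whence
$$|\gamma_1-\gamma_2|\le\frac{2}{\inf_{\gamma}|\partial_\gamma c_1|}\;\|\Lambda_{q_1}-\Lambda_{q_2}\|_{\mathcal{L}(H^{1/2}_{\#};H^{-1/2}_{\#})}.$$
It then remains to prove the uniform lower bound $\inf_{\gamma\in[0,1]}|\partial_\gamma c_1|\ge 2b^4/(4,8765)^2$. Using $J_0+J_2=\tfrac2sJ_1$ one bounds the denominator by $(J_0+b^2J_2)^2\le(J_0+J_2)^2=\tfrac{4}{s^2}J_1^2$ and reduces the claim to a lower bound for the single-variable quantity $sW/(4J_1^2)$ on $(0,1)$, from which the explicit constant follows.

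The second inequality is structurally identical but computationally heavier: with $\gamma$ fixed, the variable $b$ enters $c_1$ both through $s=\sqrt{\gamma}\,b$ and through the explicit factor $b^2$ (so now $\partial s/\partial b=\sqrt\gamma$). Differentiating in $b$ and simplifying with the same Bessel identities yields an expression for $\partial_b c_1$ of constant sign on the admissible range, satisfying $|\partial_b c_1|\ge\tfrac{4}{15}\gamma b^3$. As this lower bound is increasing in $b$, writing $c_1(b_1)-c_1(b_2)=\int_{b_2}^{b_1}\partial_b c_1(t)\,dt$ and using $|\partial_b c_1(t)|\ge\tfrac4{15}\gamma b^3$ for $t\ge b=\min\{b_1,b_2\}$ gives $|c_1^{(1)}-c_1^{(2)}|\ge\tfrac4{15}\gamma b^3|b_1-b_2|$, and the factor-$\tfrac12$ loss from the norm bound produces the stated constant $15/(2\gamma b^3)$.

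The main obstacle in both parts is this last step: the uniform lower bounds on $|\partial_\gamma c_1|$ and $|\partial_b c_1|$ over the whole parameter range. After the Bessel identities these reduce to showing that explicit combinations of $J_0,J_1,J_2,J_3$ stay bounded away from zero on $(0,1)$. The positivity is clean, since $s<j_{0,1}$ forces a fixed sign on all the relevant Bessel functions and on the Wronskian-type term $W$; but pinning down the concrete numerical constants $4,8765$ and $15/2$ requires either careful monotonicity estimates or verified numerical bounds on these one-variable Bessel expressions.
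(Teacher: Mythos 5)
Your reduction to the single eigenvalue $c_1$ via the test function $e^{i\theta}$ is exactly the paper's starting point (the paper takes $g=2^{-1/4}e^{i\theta}$ and lands on the same quantity $|c_1^{(1)}-c_1^{(2)}|$), and your derivative computations are correct: one can check that indeed
$\partial_\gamma c_1=-b^4W(s)/\bigl(s\,D^2\bigr)$ and
$\partial_b c_1=-2\bigl(b^2\sqrt{\gamma}\,W(s)+2bJ_0(s)J_2(s)\bigr)/D^2$ with $D=J_0(s)+b^2J_2(s)$, both of constant sign on the admissible range, so the mean value theorem legitimately converts the problem into uniform lower bounds on these derivatives. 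The arithmetic at the end (the factor $2$ from the norm bound combined with $|\partial_\gamma c_1|\ge 2b^4/(4.8765)^2$, resp.\ $|\partial_b c_1|\ge \tfrac{4}{15}\gamma b^3$) would indeed reproduce the stated constants, and a numerical check suggests both lower bounds are true with room to spare (e.g.\ $sW/(4J_1^2)$ stays near $1/4$ on $(0,1)$ while you only need it above $\approx 0.084$).

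The genuine gap is that these two uniform lower bounds are precisely the entire quantitative content of the theorem, and you do not prove them --- you explicitly defer them to ``careful monotonicity estimates or verified numerical bounds'' on one-variable Bessel expressions. That deferred step is where all of the paper's work lives: instead of differentiating, the paper expands $J_0$ and $J_2$ as $J_\mu(r)=r^\mu/(2^\mu\Gamma(\mu+1))+S_\mu(r)$, isolates the leading term $\tfrac18|b_2^4\gamma_2-b_1^4\gamma_1|$ from the cross quantity $b_2^2J_0(b_1\sqrt{\gamma_1})J_2(b_2\sqrt{\gamma_2})-b_1^2J_0(b_2\sqrt{\gamma_2})J_2(b_1\sqrt{\gamma_1})$, and controls the three remainder contributions with the explicit bounds on $S_0,S_2$ from Lemma \ref{Lema_Bessel_cero} and Lemma \ref{a}; the decimal constants $0.04205$ and $4.8765$ come out of that bookkeeping. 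So your route is a legitimately different (and arguably cleaner) organization of the proof --- a Wronskian-type sign argument plus MVT in place of the paper's algebraic expansion --- but as written it establishes only qualitative Lipschitz stability with \emph{some} constants, not the theorem with the stated ones. To close it you would need to supply rigorous (not merely plausible) lower bounds for $sW(s)/(4J_1(s)^2)$ and for $2\bigl(b^2\sqrt{\gamma}W(s)+2bJ_0J_2\bigr)/D^2$ on the whole parameter range, e.g.\ by feeding the same series remainders $S_\mu$ into your derivative formulas; note that the crude two-sided bounds (\ref{estimacion_inferior}) alone appear insufficient for the second one, since they only give about $0.125\,\gamma b^3$ against the required $0.267\,\gamma b^3$.
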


The proof of this theorem is in the appendix below. 

Inequality (\ref{estimacion_estabilidad}) provides a Lipschitz stability result for $\Lambda$ when $b$ is fixed. This shows that the lack of Lipschitz stability is related to variations in the position of the discontinuity, which is the main idea in the negative result given in Theorem \ref{th_st1}. 

A numerical quantification of this Lipschitz stability for $b$ fixed is easily obtained. We fix $b=b_0$ and consider 
$$
F_{h,b_0}=\{ q\in L^\infty(\Omega) \; : \; q(r)=\gamma \chi_{(0,b)}(r), \; b=b_0, \; \gamma=0+hj, \; j=1,...,1/h-1 \} .
$$ 
and, for $q_0\in F_{h,b_0}$, 
\begin{equation} \label{eq_s2}
C_2(h,q_0,b_0)= \max_{q\in F_{h,b_0}} \frac{\| q_0-q \|_{L^\infty}}{\| \Lambda_{q_0} - \Lambda_{q} \|_{\mathcal{L}(H^{1/2}_{\#}; H^{-1/2}_{\#})}}, 
\end{equation}
then $C_2(h,q_0,b_0)$ remains bounded as $h\to 0$ for all $q_0\in F_h$. In Figure \ref{fig:estabilidad_b_iguales} we show the behavior of $C_2(h,q_0,b_0)$ when $h=10^{-4}$ for different values of $b_0$. To illustrate the behavior with respect to $b\to 0$ we plot in the left hand side of Figure \ref{fig:estabilidad_b_iguales} the graphs of the functions 
\begin{equation} \label{eq_s1}
C_{2,\min}(b)=\min_{q\in F_{h,b}}C_2(10^{-4},q,b), \mbox{ and } C_{2,\max}(b)=\max_{q\in F_b}C_2(10^{-4},q,b).
\end{equation}
We see that both constants become larger for small values of $b$. We also see that both graphs are close in this logarithmic scale. However, the range of the interval $[C_{2,\max}(b),C_{2,\min}(b)]$ is not small, as showed in the right hand side of Figure \ref{fig:estabilidad_b_iguales}.

\begin{figure}[htbp]
  \centering
 \subfigure[]{  \includegraphics[width=0.45\textwidth]{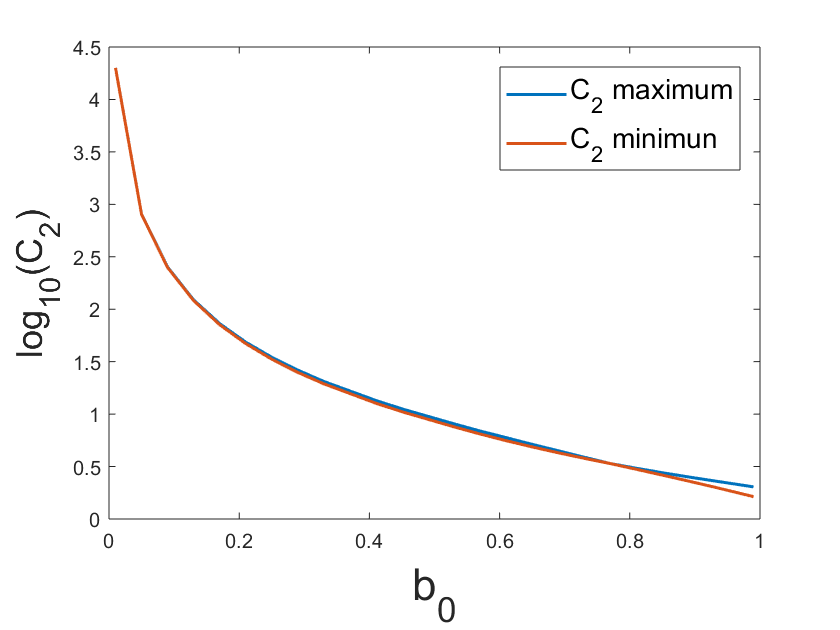}}
  \subfigure[]{  \includegraphics[width=0.45\textwidth]{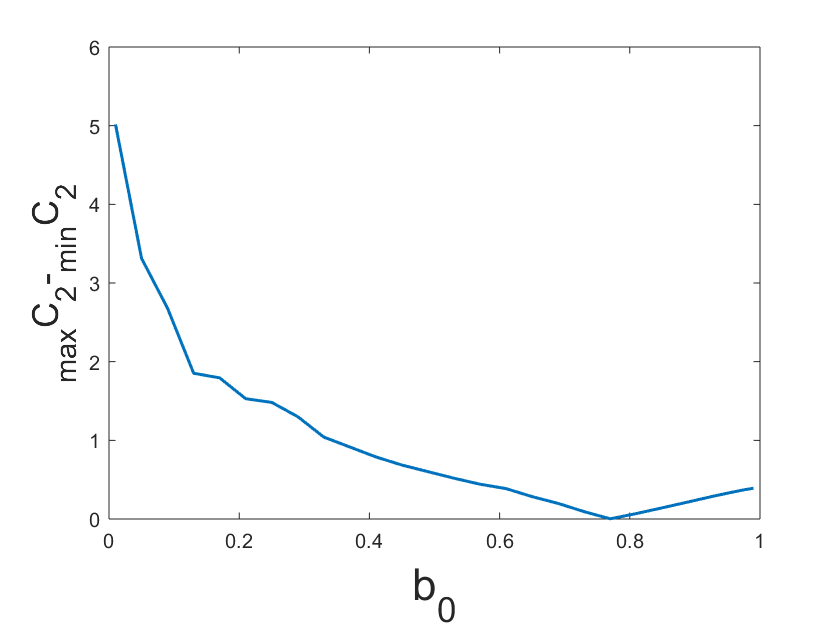}}
  \caption{Numerical estimate of the stability constant $C_2$ in (\ref{eq_s2}) for $h=10^{-4}$. To illustrate the behavior on $b$ we plot the maximum and minimum value when $q\in F_{h,b}$ with respect to $b$  in logarithmic scale (left), and its range in normal scale (right). }\label{fig:estabilidad_b_iguales}
\end{figure}

Concerning inequality (\ref{estimacion_estabilidad_2}) in Theorem \ref{th 0.1}, it provides a stability result of  $\Lambda$ with respect to the position of the discontinuity. In particular, this means that we can expect some Lipschitz stability if we consider a norm for the potentials that is sensitive to the position of the discontinuity. This is not the case for the $L^\infty$ norm but it is true for the $L^p$-norm for some $1\leq p <\infty$. In particular,
\begin{equation} \label{eq_s2b}
C_2(h,q_0,b_0)= \max_{q\in F_{h,b_0}} \frac{\| q_0-q \|_{L^1}}{\| \Lambda_{q_0} - \Lambda_{q} \|_{\mathcal{L}(H^{1/2}_{\#}; H^{-1/2}_{\#})}}, 
\end{equation}
is bounded as $h\to 0$ and $b\geq b_0 >0$. In Figure \ref{fig:estabilidad_norma1} we show the values when $h=10^{-4}$. We observe that the constant blows up as $b\to 0$.

\begin{figure}[htbp]
  \centering
\includegraphics[width=0.50\textwidth]{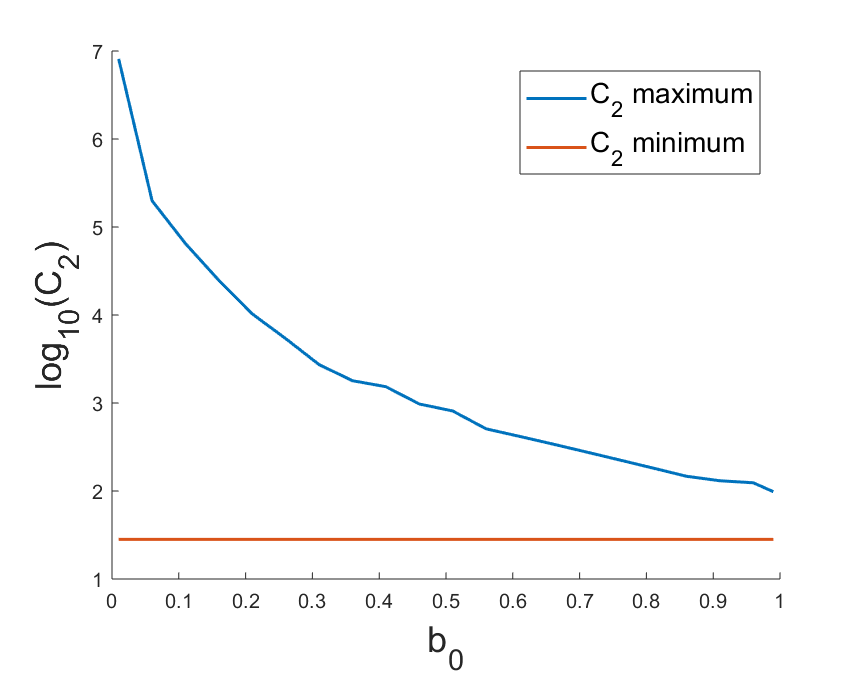}
 \caption{ $C_2(h,q)$ for $b>b_0$ when $h=10^{-4}$.} 
 \label{fig:estabilidad_norma1}
 \end {figure}

\section{Range of the DtN map}

In this section we are interested in the range of $\Lambda$ when $q\in F$, i.e. the set of sequences $\{ c_n \}_{n\geq 0}$ of the form (\ref{c0})-(\ref{c0_b}) for all possible $b,\gamma \in[0,1]\times[0,1]$. 

As $F$ is a bi-parametric family of potentials, it is natural to check if we can characterize the family $\{ c_n \}_{n\geq 0}$ with only the first two coefficients $c_0$ and $c_1$. In this section we give numerical evidences of the following facts:
\begin{enumerate}
\item  The first two coefficients $c_0$ and $c_1$ in (\ref{c0})-(\ref{c0_b}) are the most sensitive with respect to $(b,\gamma)$ and, therefore, the more relevant ones to identify $b$ and $\gamma$ from the DtN map. 
\item The function
\begin{eqnarray} \label{DtNh}
\Lambda^h:F_h &\to& \mathbb{R}^2 \\ \nonumber
q &\to & (c_0,c_1),
\end{eqnarray}
is injective. This means, in particular, that the DtN map  can be characterized by the coefficients $c_0$ and $c_1$, when restricted to functions in $F_h$.  We also illustrate the set of possible coefficients $c_0,c_1$. 
\item The lack of stability for $\Lambda$ is associated to  higher density of points in the range of $\Lambda^h$. This occurs when either $b$ or $\gamma$ are close to zero. 
\end{enumerate}

\subsection{Sensitivity of $c_n$}

To analyze the relevance and sensitivity of the coefficients $c_n=c_n(b,\gamma)$ to identify the parameters $(b,\gamma)$ we have computed their range when $(b,\gamma)\in [0,1]\times [0,1]$, and the norm of their gradients. As we see in (\ref{table_1}) the range decreases for large  $n$. This means that, for larger values of $n$, the variability of $c_n$ is smaller and they are likely to be less relevant to identify $q$. 

However, even if the range of $c_n$ becomes smaller for large $n$ they could be more sensitive to small perturbations in $(b,\gamma)$ and this would  make them useful to distinguish different potentials. But this is not the case.  In Figure \ref{fig:gradientegammaconstante} we show that for the given values of $\gamma=0.1,0.34,0.67,0.99$ and $b\in (0,1]$ the gradients of the first two coefficients, with respect to $(b,\gamma)$, are larger than the others. Therefore, we conclude that the two first coefficients $c_0$ and $c_1$ are the most sensitive, and therefore relevant, to identify the potential $q$. 

We also see in Figure \ref{fig:gradientegammaconstante} that these gradients are very small for $b<<1$. This means, in particular, that identifying potentials with small $b$ from the DtN map should be more difficult.
\begin{table} \label{table_1}
$$ 
\begin{array}{|l|l|}
\mbox{Coefficient} & \mbox{Range} \\ 
\hline
c_0  & 0.5523 \\
\hline
c_1  & 0.2486 \\
\hline
c_2  & 0.1588 \\
\hline
c_3  & 0.1157 \\
\hline
c_4  & 0.0904 \\
\hline
c_5  & 0.0736 \\
\end{array}
$$
\caption{Range of the coefficients, i.e. for each $c_n$ the range is defined as $\max_{q\in F_h} c_n - \min_{q\in F_h} c_n$.}
\end{table}

\begin{figure}[htbp]
  \centering
\includegraphics[width=0.99\textwidth]{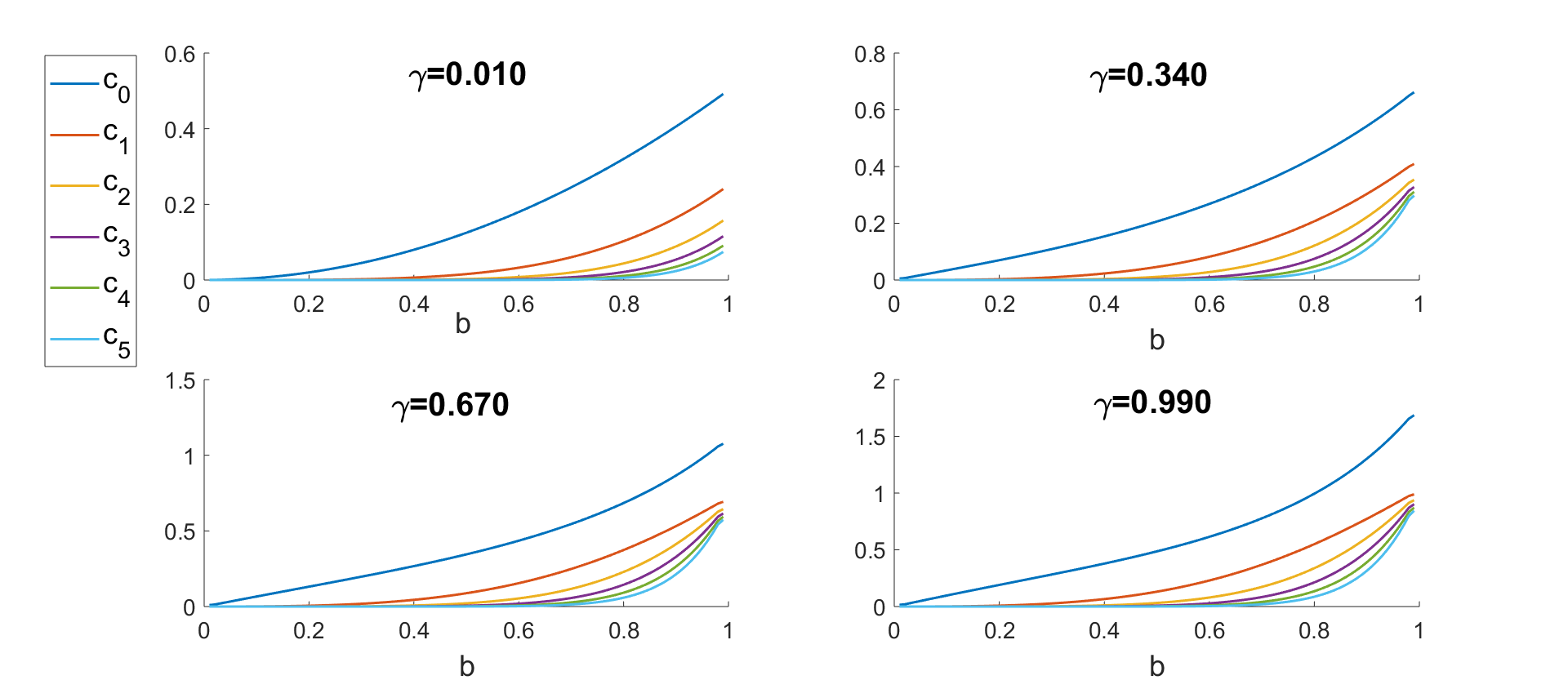}

 \caption{Norm of the gradient of the coefficients $c_n(\gamma,b)$ in terms of $b\in(0,1)$ for different values of $\gamma$. We see that the gradients of higher coefficients $n\geq 2$ are smaller than those of the first two ones. We also observe that these gradients become small for small values of $b$.   } \label{fig:gradientegammaconstante}
 \end {figure}

\subsection{Range of the DtN in terms of $c_0,c_1$}

Now we focus on the range of the DtN in terms of the relevant coefficients $(c_0,c_1)$, i.e. the range of the map $\Lambda^h$ in 
(\ref{DtNh}): $R(\Lambda^h)$. In Figure \ref{fig:graficoc0c1} we show this range. 
 
\begin{figure}[htbp]
  \centering
\includegraphics[width=0.50\textwidth]{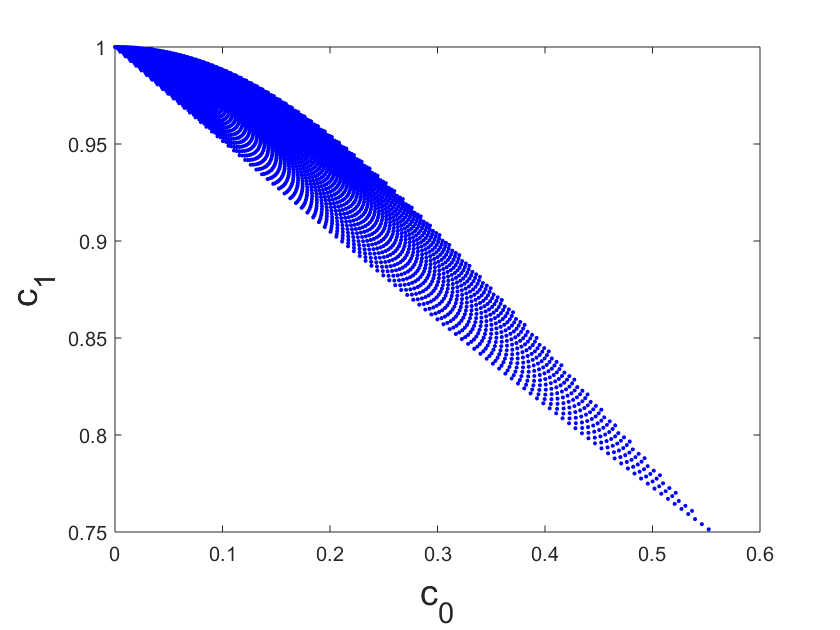}
 \caption{Range of the discrete DtN map in \ref{DtNh}} \label{fig:graficoc0c1}
 \end {figure}
Coordinates lines for fixed $\gamma$ and $b$ are given in Figure \ref{fig:curvasbgammaconstante}. We observe that $R(\Lambda^h)$ is a convex set between the curves 
\begin{eqnarray*}
&& r_{low}: \{ (c_0(\gamma,1), c_1(\gamma,1)),  \mbox{ with } \gamma\in[0,1]\},  \\
&& r_{up}: \{ (c_0(1,b), c_1(1,b)),  \mbox{ with } b\in[0,1]\}.
\end{eqnarray*}

Note also that, in the $c_0,c_1$ plane, the length of the coordinates lines associated to  $b$ constant are segments that become smaller as $b\to 0$. Analogously, the length of those associated to constant $\gamma$ become smaller as $\gamma\to 0$. Thus, the region where either $b$ or $\gamma$ are small produces the higher density of points in the range of $\Lambda^h$. This corresponds to the upper left part of its range (see Figure \ref{fig:graficoc0c1}). On the other hand, this Figure provides a numerical evidence of the injectivity of $\Lambda^h$ too. In fact, any point inside  $R(\Lambda^h)$ is the intersection of two coordinates lines associated to some unique $b_0$ and $\gamma_0$.  

\begin{figure}[htbp]
  \centering 
 \subfigure[]{  \includegraphics[width=0.4\textwidth]{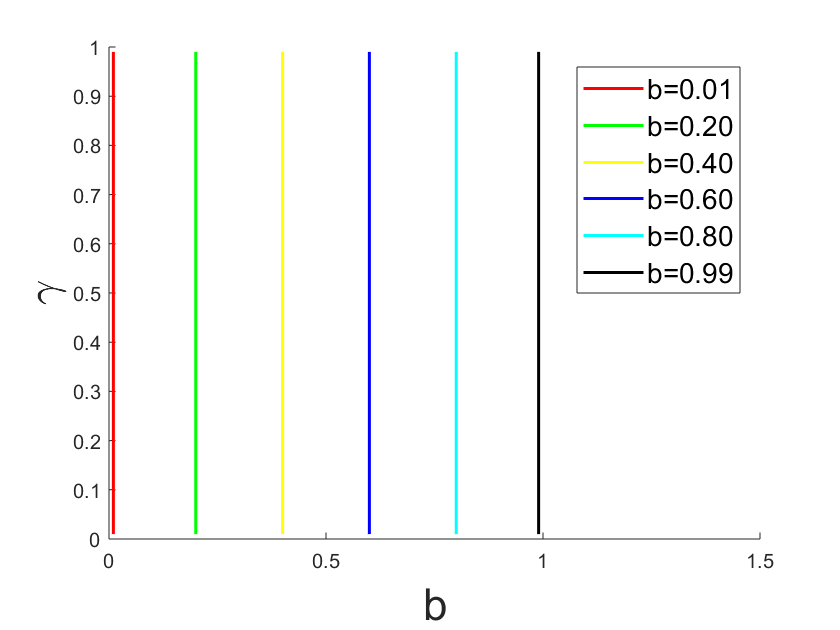}}
  \subfigure[]{  \includegraphics[width=0.4\textwidth]{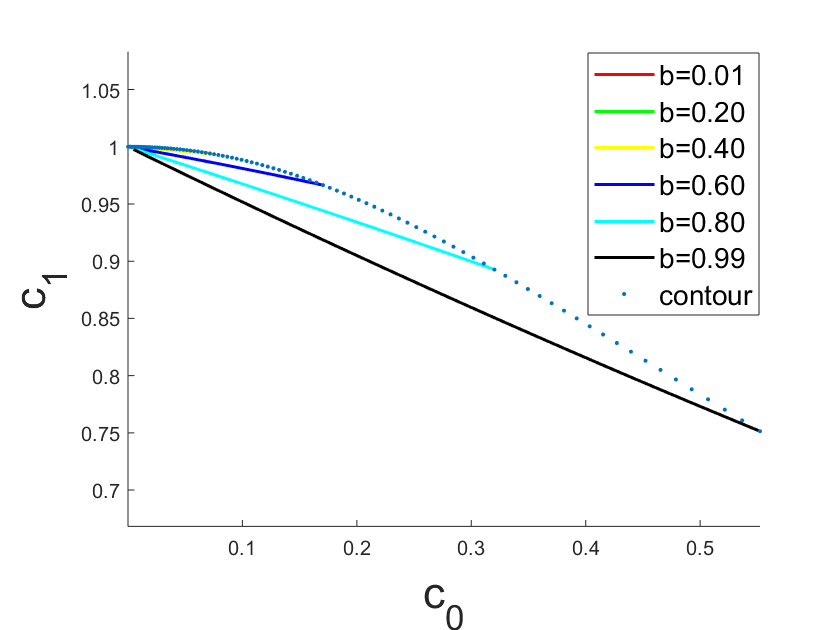}}
  \subfigure[]{  \includegraphics[width=0.4\textwidth]{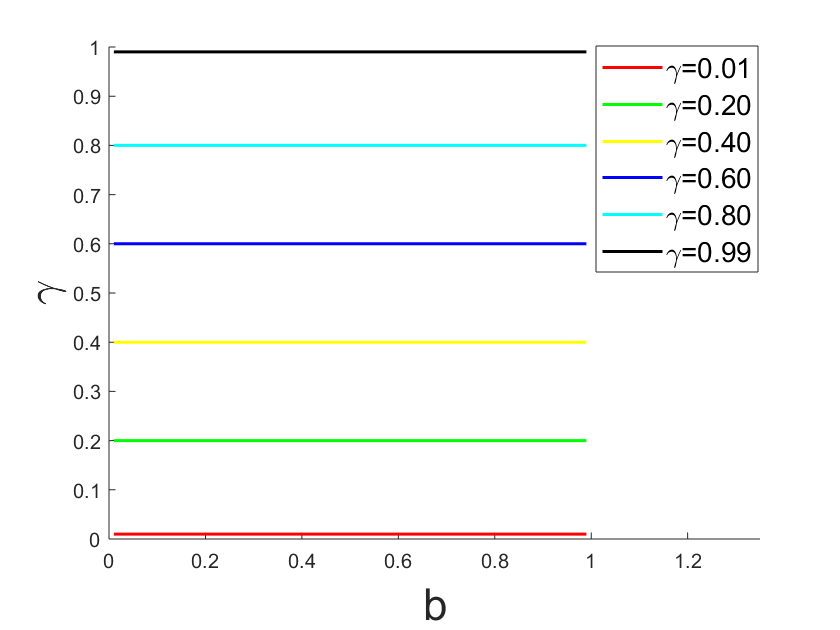}}
  \subfigure[]{  \includegraphics[width=0.4\textwidth]{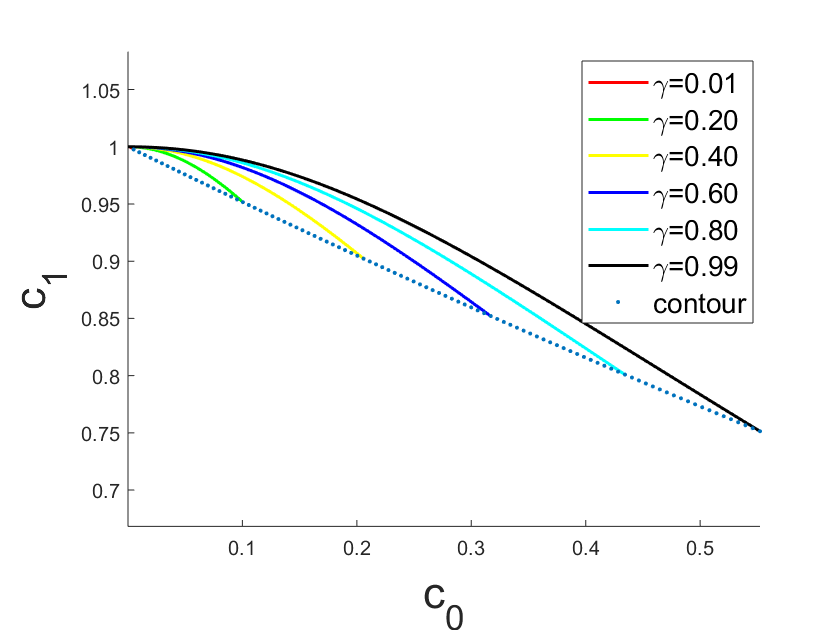}}
  \caption{Coordinates lines of the map $\Lambda^h$ defined in (\ref{DtNh}). The upper figure contains the coordinates lines associated to  $b$ constant, while the lower one those corresponding to $\gamma$ constant. }\label{fig:curvasbgammaconstante}
\end{figure}

The higher density of points in the upper left hand side of the range of $\Lambda^h$ should correspond to potentials $q$ with large stability constant $C_2(h,q)$. In Figure \ref{fig:curvanivelC2_1} we show the level sets of $C_2(h,q)$ for $h=10^{-4}$ and different $q\in F_h$. The region with larger constant corresponds to small values of $b$ (upper right  figure) and larger values of $c_1$ (upper left and lower figures). On the other hand, the region with lower stability constant is for $b$ close to $b=1$, which corresponds to the lower part of the range of $\Lambda^h$ when $c_0$ is small.   

\begin{figure}[htbp]
  \begin{center}
	\begin{tabular}{cc}
\includegraphics[width=0.50\textwidth]{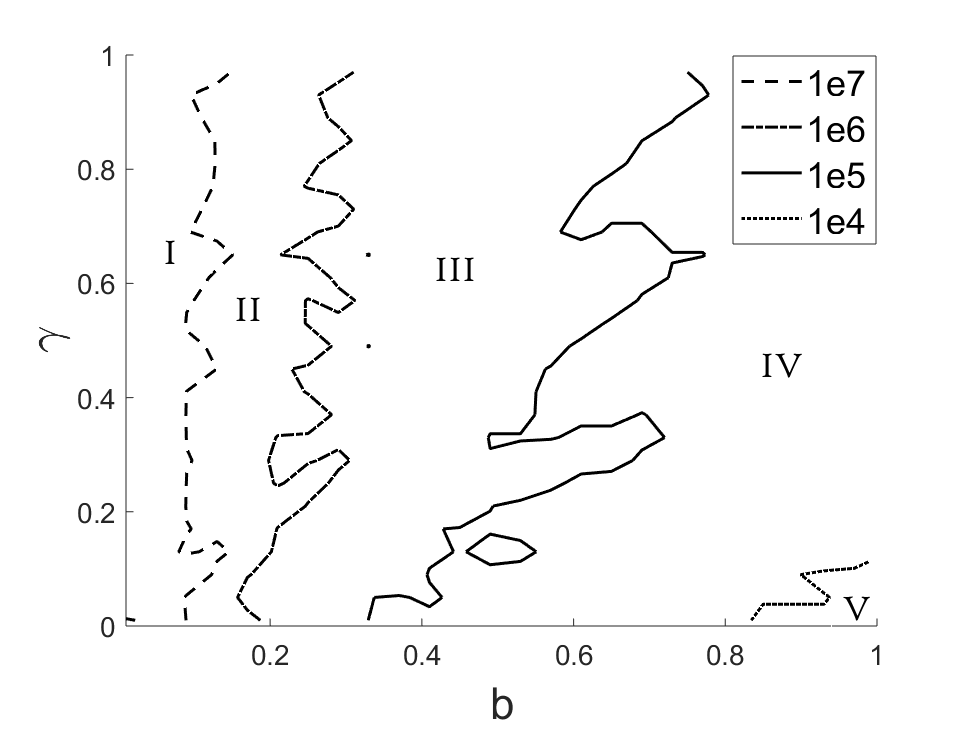} &
\includegraphics[width=0.50\textwidth]{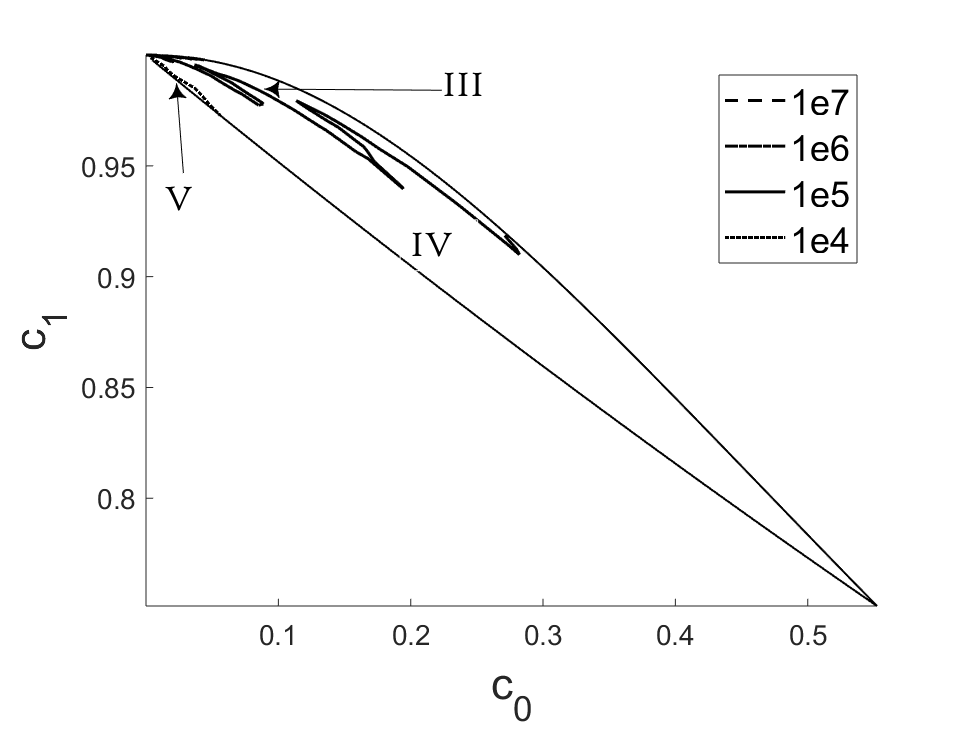} 
\end{tabular}

\includegraphics[width=0.50\textwidth]{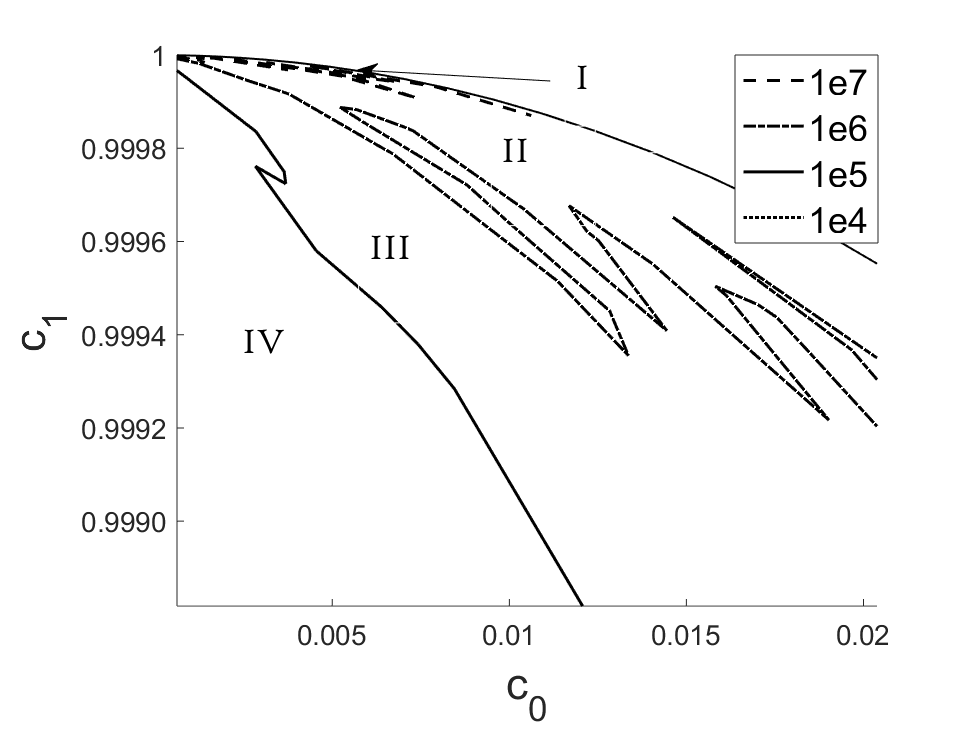}
\end{center}
 \caption{Level sets of the $C_2(b,\gamma)$ for $q\in F_h$ and $h=10^{-4}$ in terms of $(b,\gamma)$ (upper left) and in terms of $ (c_0,c_1)$ (upper right). A zoom of the upper left region in this last figure is in the lower figure. Regions separated by the level sets are indicated: region I corresponds to the potentials with stability constant larger that $10^7$, region II corresponds to those with stability constant lower that $10^7$ but larger than $10^6$, and so on. } \label{fig:curvanivelC2_1}
 \end {figure}

It is interesting to analyze the set of potentials with the same coefficient $c_0$ or $c_1$. We give in Figure \ref{fig:curvasc0c1constante} the coordinates lines of the inverse map $(\Lambda^h)^{-1}$. When increasing the value of either $c_0$ (light lines) or $c_1$ (dark lines) we obtain lines closer to the left part of the $(b,\gamma)$ region. We see that the angle between coordinate lines  becomes  very small for $b$ small. In this region, close points could be the intersection of coordinates lines associated to not so close parameters $(b,\gamma)$. This agrees with the region where the stability constant is larger. 

\begin{figure}[htbp]
  \centering 
 \subfigure[]{  \includegraphics[width=0.45\textwidth]{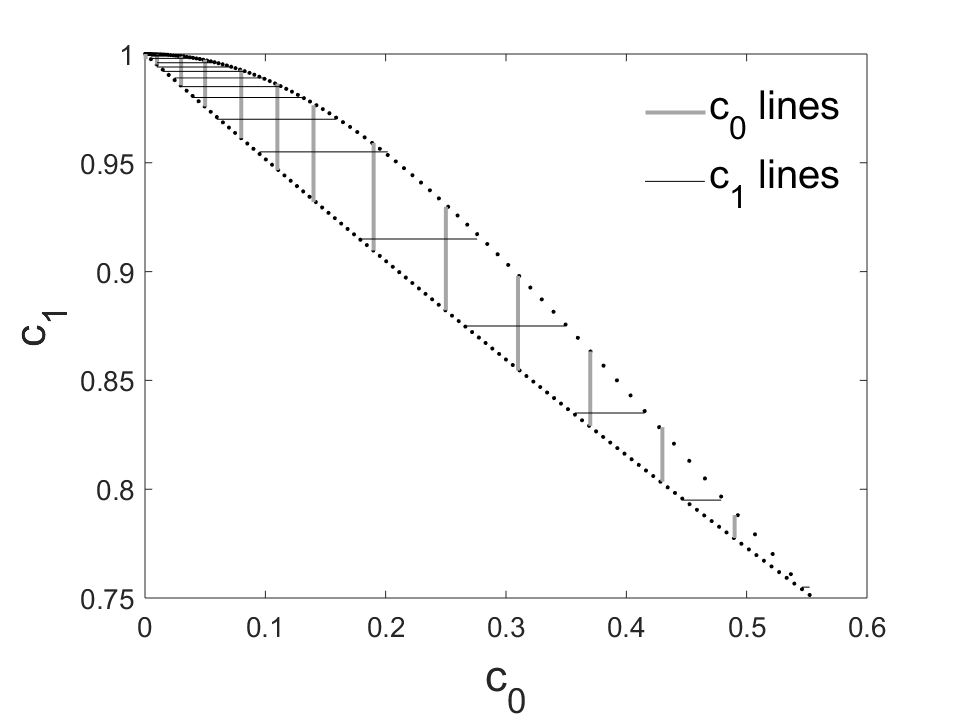}}
  \subfigure[]{  \includegraphics[width=0.45\textwidth]{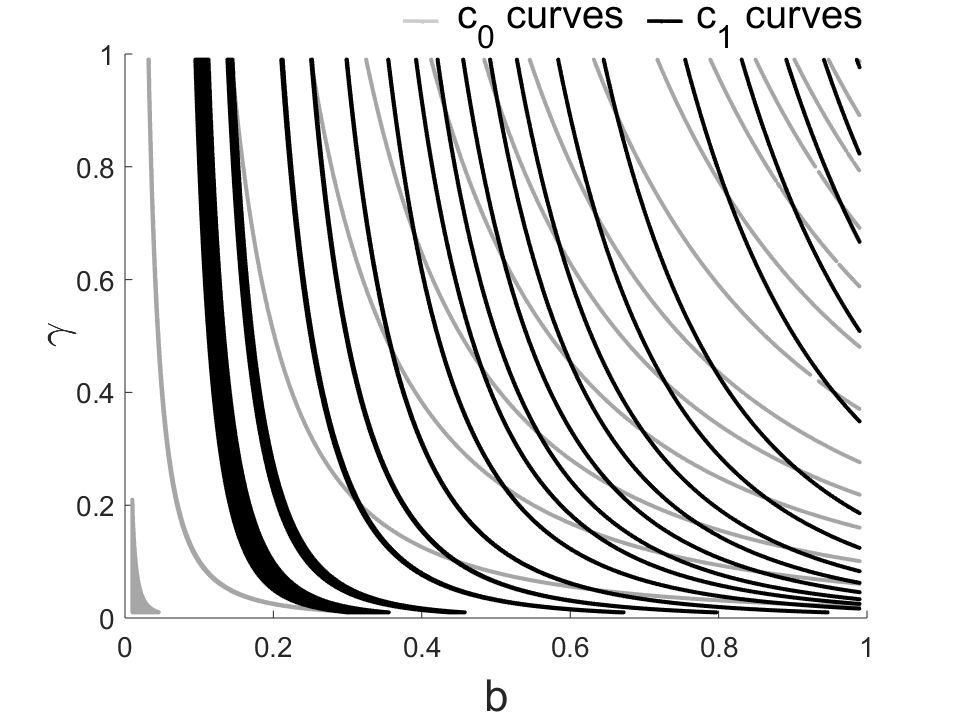}}
  
  \caption{Coordinates lines of the map $(\Lambda^h)^{-1}$ defined in (\ref{DtNh})}\label{fig:curvasc0c1constante}
\end{figure}

\section{ Conclusions}

We have considered the relation between the potential in the Schr\"odinger equation and the associated DtN map in one of the simplest situations, 
i.e. for the subset of radial one step potentials in dimension 2. In particular we have focused on two difficult problems: the stability of the map $\Lambda$ (defined in \ref{eq_mapl}) and its range.  
In this case, the map $\Lambda$ is easily characterized in terms of the Bessel functions and this allows us to give some analytical and numerical results on these problems. 
We have proved the lack of any possible stability result, by adapting the argument in \cite {A} [Alessandrini 1988] for the conductivity problem. We have also obtained some partial Lipschitz stability when the position of the discontinuity is fixed in the potential and numerical evidences of the stability with respect to the $L^1$ norm. Finally, we have characterized numerically the range of $\Lambda$ in terms of the first two eigenvalues of the DtN map and given some insight in the regions where stability of $\Lambda$ is worse.

\section*{Acknowledgements} 
The first three authors have been partially supported  by project MTM2017-85934-C3-3-P from the MICINN (Spain).
The fourth author has been partially supported by project MTM2016-80474-P of MINECO, Spain.


\section*{Appendix}

To prove Theorems \ref{th_st1} and \ref{th 0.1} we will need the following technical results about the Bessel functions.

\begin{lemma}\label{Lema_Bessel_cero} Let 
 $J_\mu (r)$ the Bessel functions of first kind of order $\mu > -\frac{1}{2}$. It is well known (see \cite{Gr} [Grafacos 2008]) that 
$$
	J_\mu(r)=\frac{r^\mu}{2^\mu \Gamma \left(  \mu +1 \right)}+S_\mu(r),
$$
	where 
$$
	S_\mu (r)=\frac{r^\mu}{2^\mu \Gamma \left( \mu+\frac{1}{2}  \right)\Gamma \left( \frac{1}{2} \right)}\int_{-1}^1 \left(   \cos rt -1\right)\left( 1-t^2  \right)^{\mu-\frac{1}{2}}dt.
$$
For $n=0, 1,2, \cdot \cdot \cdot$ and $r \in (0,1) $ the following holds:	
\begin{equation}\label{estimacion_resto_bessel}
-\frac{r^{n +2}}{2^{n+1} \Gamma \left(n+ \frac{3}{2}  \right) \sqrt{\pi}  }  \int_0^1  \left(1-t^2   \right)^{n +\frac{1}{2}}dt \leq S_n (r)  
\end{equation} 
$$\leq   -\frac{r^{n +2}\cos r}{2^{n+1} \Gamma \left(n+ \frac{3}{2}  \right) \sqrt{\pi}  }  \int_0^1  \left(1-t^2   \right)^{n +\frac{1}{2}}dt,$$

\begin{equation}\label{estimacion_inferior}
0 <\frac{r^n}{2^{n+1} n!} \leq J_n(r) \leq  \frac{r^n}{2^{n} n!}, 
\end{equation}
and
\begin{equation}\label{estimacion_derivada_bessel}
0< \frac{r^{n}}{2^{n+2} n!} \leq J_{n+1}'(r) \leq  \frac{r^{n}}{2^{n+1}n!}. 
\end{equation} 
 More explicit estimates for $S_0(r)$ and $S_2(r)$ are given by,
\begin{equation}\label{bessel_orden_0}
 -\frac{r^2}{4} \leq S_0(r) \leq -\frac{r^2\cos r}{4} \leq 0, 
\end{equation}
\begin{equation}\label{bessel_orden_2}
 -\frac{r^4}{15 \pi} 0.4909 \leq S_2(r) \leq -\frac{r^4\cos r}{15 \pi}0.4909.
\end{equation}

\end{lemma}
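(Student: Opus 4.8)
The plan is to reduce every estimate to the integral representation of $S_\mu$ (which the lemma takes as known) together with two elementary pointwise bounds on $\cos x-1$ and a Beta-function evaluation of the resulting $t$-integrals. Since $\Gamma(1/2)=\sqrt\pi$ and the integrand of $S_n$ is even in $t$, I would first rewrite
$$
S_n(r)=\frac{2r^n}{2^n\Gamma(n+\tfrac12)\sqrt\pi}\int_0^1(\cos rt-1)(1-t^2)^{n-\frac12}\,dt.
$$
The estimate (\ref{estimacion_resto_bessel}) then rests on bounding $\cos rt-1$ pointwise for $t\in(0,1)$, $r\in(0,1)$. Writing $1-\cos x=\int_0^x\int_0^s\cos\tau\,d\tau\,ds$, the bound $\cos\tau\le1$ gives $\cos x-1\ge-\tfrac{x^2}{2}$, while $\cos\tau\ge\cos r$ for $\tau\le x=rt\le r$ (monotonicity of cosine on $(0,1)\subset(0,\pi/2)$) gives $\cos(rt)-1\le-\tfrac{r^2t^2}{2}\cos r$. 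Substituting these two inequalities and pulling out $r^2$ leaves the integral $\int_0^1 t^2(1-t^2)^{n-1/2}\,dt$, which I evaluate by the Beta function as $\frac{1}{2n+1}\int_0^1(1-t^2)^{n+1/2}\,dt$; combined with $\Gamma(n+\tfrac32)=\tfrac{2n+1}{2}\Gamma(n+\tfrac12)$ this reproduces exactly the prefactor in (\ref{estimacion_resto_bessel}).

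The explicit estimates (\ref{bessel_orden_0}) and (\ref{bessel_orden_2}) are then just the cases $n=0$ and $n=2$ of (\ref{estimacion_resto_bessel}): evaluating $\int_0^1(1-t^2)^{1/2}\,dt=\pi/4$ and $\int_0^1(1-t^2)^{5/2}\,dt=5\pi/32\approx0.4909$, together with $\Gamma(3/2)=\sqrt\pi/2$ and $\Gamma(7/2)=\tfrac{15}{8}\sqrt\pi$, yields the stated constants $1/4$ and $0.4909/(15\pi)$. For (\ref{estimacion_inferior}) I would use $J_n=\frac{r^n}{2^n n!}+S_n$ and the bounds just derived: since $\cos r>0$ on $(0,1)$, the upper bound in (\ref{estimacion_resto_bessel}) gives $S_n\le0$, hence $J_n\le\frac{r^n}{2^n n!}$; evaluating the lower bound gives $S_n\ge-\frac{r^{n+2}}{2^{n+2}(n+1)!}$, which is $\ge-\frac{r^n}{2^{n+1}n!}$ because $r<1$ forces $\frac{r^2}{2(n+1)}<1$, and this yields $J_n\ge\frac{r^n}{2^{n+1}n!}$.

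For the derivative estimate (\ref{estimacion_derivada_bessel}) I would work directly from the power series $J_{n+1}(r)=\sum_{m\ge0}\frac{(-1)^m}{m!(m+n+1)!}(r/2)^{2m+n+1}$, differentiate term by term to get $J_{n+1}'(r)=\sum_{m\ge0}(-1)^m a_m$ with $a_0=\frac{r^n}{2^{n+1}n!}$, and check that on $(0,1)$ the ratio $a_{m+1}/a_m$ of consecutive absolute terms is bounded by $3/8$, so the terms strictly decrease. The alternating-series bracket then gives $a_0-a_1\le J_{n+1}'\le a_0$, which is the claimed upper bound, and a short computation $a_1/a_0=\frac{(n+3)r^2}{4(n+1)(n+2)}\le3/8<1/2$ shows $a_0-a_1\ge a_0/2=\frac{r^n}{2^{n+2}n!}$, the lower bound. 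The main obstacle is precisely this derivative lower bound: the naive route through the recurrences $2J_{n+1}'=J_n-J_{n+2}$ or $J_{n+1}'=J_n-\frac{n+1}{r}J_{n+1}$ loses too much in the subtraction (it yields only $J_{n+1}'\ge0$, or a constant short of $\frac{r^n}{2^{n+2}n!}$), so the sharper alternating-series control of the second term is what makes it work. Everything else is a routine assembly of the pointwise cosine bounds with the Beta- and Gamma-function identities.
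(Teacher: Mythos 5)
Your proof is correct, and for the remainder estimate (\ref{estimacion_resto_bessel}), the bounds (\ref{estimacion_inferior}), and the explicit cases (\ref{bessel_orden_0})--(\ref{bessel_orden_2}) it is essentially the paper's argument: the same pointwise inequality $\frac{r^2t^2}{2}\cos r\leq 1-\cos(rt)\leq\frac{r^2t^2}{2}$, the same integration-by-parts identity $\int_0^1 t^2(1-t^2)^{n-1/2}dt=\frac{1}{2n+1}\int_0^1(1-t^2)^{n+1/2}dt$, and the same $\Gamma$-function bookkeeping. The one genuine divergence is (\ref{estimacion_derivada_bessel}): the paper obtains it from the recurrence $2J_{n+1}'(r)=J_n(r)-J_{n+2}(r)$, while you differentiate the power series and invoke the alternating-series bracket with the ratio bound $a_{m+1}/a_m\leq 3/8$. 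Your route is self-contained and works, but your stated reason for rejecting the recurrence is not quite right: it only falls short if you feed it the already-rounded bound $J_n\geq\frac{r^n}{2^{n+1}n!}$. Using the full strength of (\ref{estimacion_resto_bessel}), namely $J_n(r)\geq\frac{r^n}{2^n n!}\left(1-\frac{r^2}{4(n+1)}\right)$ together with $J_{n+2}(r)\leq\frac{r^{n+2}}{2^{n+2}(n+2)!}$, gives $2J_{n+1}'(r)\geq\frac{r^n}{2^n n!}\left(1-\frac14-\frac18\right)=\frac{5}{8}\cdot\frac{r^n}{2^n n!}\geq\frac{r^n}{2^{n+1}n!}$, which is exactly the claimed lower bound; this is what the paper's one-line derivation relies on. So both approaches close the gap, yours by sharper control of the second series term, the paper's by not discarding the sharp remainder bound before subtracting.
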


\begin{proof} 
To prove (\ref{estimacion_resto_bessel}) we use 
\begin{equation}\label{papel_0}
   \frac{r^2t^2}{2}\cos r              \leq 1- \cos (rt) \leq \frac{r^2t^2}{2}, \;\; \;\; \;   r,t \in (0,1), 
  \end{equation} 
and $$\int_0^1 t^2 \left(1-t^2   \right)^{n -\frac{1}{2}}dt= \frac{1}{2 \left( n +\frac{1}{2}  \right)}\int_0^1  \left(1-t^2   \right)^{n +\frac{1}{2}}dt. $$
From  (\ref{estimacion_resto_bessel}) and the well-known identities,
$$\begin{array}{lll}
\Gamma \left( \frac{1}{2} \right) = \sqrt{\pi}, \hspace{0.25cm} \\ \Gamma (r+1)=r \Gamma (r), \hspace{0.3cm} r>0, \hspace{0.25cm} \\ 2 J'_{n+1}(r)=J_{n}(r)-J_{n+2}(r), \hspace{0,3cm} r>0,
\end{array} $$ (see \cite{LE} [Lebedev 1972]),
 we get (\ref{estimacion_inferior}), (\ref{estimacion_derivada_bessel}), (\ref{bessel_orden_0}) and (\ref{bessel_orden_2}).

\end{proof}

The following lemma will be used in the proof of Theorem \ref{th 0.1}.

\begin{lemma} \label{a}
For $0 < r \leq  s <1$ and $n=0,2$,  we have
$$\int_0^1 \left( 1-\cos (rt)  \right) \left(1-t^2    \right)^{n-\frac{1}{2}} dt \leq \frac{\pi r^2}{28n+8} ,$$ and $$\int_0^1 \left( \cos (rt)- \cos (st)  \right) \left(1-t^2    \right)^{n-\frac{1}{2}} dt \leq  \frac{\pi(s^2-r^2)}{28n+8} .$$
\end{lemma}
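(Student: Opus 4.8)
The plan is to reduce both inequalities to a single pointwise estimate for the integrand and then integrate against the weight $(1-t^2)^{n-1/2}$. I would first observe that the first inequality is the special case of the second obtained by sending the lower frequency to $0$: since $\cos(0\cdot t)=1$, the bound $\int_0^1(1-\cos(rt))(1-t^2)^{n-1/2}\,dt\le \pi r^2/(28n+8)$ is exactly the second estimate with the pair $(r,s)$ replaced by $(0,r)$. Hence it suffices to prove the second inequality, and for this I would establish the pointwise bound
\begin{equation*}
\cos(rt)-\cos(st)\le \tfrac12\,(s^2-r^2)\,t^2,\qquad 0\le r\le s,\ t\in[0,1],
\end{equation*}
after which the rest is a routine integration. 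Note that both integrands are nonnegative (as $\cos$ is decreasing on $(0,1)\subset(0,\pi/2)$), so the upper bounds are meaningful.

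The key step, and the place where care is needed, is this pointwise bound. I would prove it by introducing the auxiliary function $g(a)=\cos(at)+\tfrac12 a^2t^2$ and noting that $g'(a)=t\,(at-\sin(at))\ge 0$ for $a,t\ge 0$, because $\sin x\le x$ on $[0,\infty)$. Thus $g$ is nondecreasing on $[0,\infty)$, and $r\le s$ gives $g(r)\le g(s)$, which rearranges to the claimed inequality. I expect this to be the main obstacle only in the sense that the naive route fails: if one instead subtracts the two halves of (\ref{papel_0}), writing $\cos(rt)-\cos(st)=(1-\cos(st))-(1-\cos(rt))\le \tfrac12 s^2t^2-\tfrac12 r^2t^2\cos r$, the factor $\cos r\le 1$ enters with the wrong sign and produces $\tfrac12 t^2(s^2-r^2\cos r)$, which is \emph{larger} than the target $\tfrac12 t^2(s^2-r^2)$. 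The monotonicity argument is precisely what delivers the clean factor $s^2-r^2$.

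Finally, integrating the pointwise bound against $(1-t^2)^{n-1/2}$ reduces matters to evaluating $\int_0^1 t^2(1-t^2)^{n-1/2}\,dt$. Using the identity $\int_0^1 t^2(1-t^2)^{n-1/2}\,dt=\frac{1}{2n+1}\int_0^1(1-t^2)^{n+1/2}\,dt$ already recorded in the proof of Lemma \ref{Lema_Bessel_cero} (equivalently, recognizing the Beta integral $\tfrac12 B(3/2,n+1/2)$), one finds the value $\pi/4$ for $n=0$ and $\pi/32$ for $n=2$. Multiplying by $\tfrac12(s^2-r^2)$ then gives the bounds $\pi(s^2-r^2)/8$ and $\pi(s^2-r^2)/64$, whose denominators $8$ and $64$ are exactly $28n+8$ at $n=0$ and $n=2$; this matches the stated right-hand side and completes the proof. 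The only thing to verify carefully here is these two integral evaluations, which is a short Wallis-type computation.
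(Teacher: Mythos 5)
Your proof is correct and follows essentially the same route as the paper: the heart of both arguments is the pointwise bound $\cos(rt)-\cos(st)\le\tfrac12(s^2-r^2)t^2$ followed by the Wallis-type evaluation of $\int_0^1 t^2(1-t^2)^{n-\frac12}\,dt$ via the reduction recorded in Lemma \ref{Lema_Bessel_cero}. The only cosmetic difference is that the paper derives the pointwise bound from the product formula $\cos(rt)-\cos(st)=2\sin\frac{(s+r)t}{2}\sin\frac{(s-r)t}{2}$ together with $\sin x\le x$, whereas you obtain it by a monotonicity argument on $g(a)=\cos(at)+\tfrac12 a^2t^2$; both rest on the same elementary fact $\sin x\le x$.
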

\begin{proof} The previous estimates are consequence of (\ref{papel_0}) and the inequality
$$ \cos r- \cos s    = 2 \sin  \frac{s+r}{2}      \sin  \frac{s-r}{2}  \      \leq \frac{s^2-r^2}{2}  . $$

\end{proof}

\begin{proof} (of Theorem \ref{th_st1})

We take $\gamma =1$ without loss of generality.
For $b_0 \in (0,1)$  we will consider the fixed potential 
$$  q_{0}(r, \theta)=\left\{   \begin{array}{ll}
	1, \;\; 0<r<b_0, \\ 0, \;\; b_0 \leq r <1,
	\end{array}    \right.  $$ and a positive integer $k(b_0)$ satisfying $b_0+\frac{1}{k(b_0)}<1$.
  We define the potentials
\begin{equation}\label{b_k}
      q_k(r, \theta)=\left\{   \begin{array}{ll}
	1, \;\; 0<r<b_k, \\ 0, \;\; b_k \leq r <1,
	\end{array}    \right.   \;\; k=1,2, \cdot \cdot \cdot,
\end{equation}	
	 with $b_k=b_0+\frac{1}{k(b_0)+k}$.
	
	We have that $\|q_0-q_k \|_{L^\infty}=1$ and to have (\ref{C2b}) we will prove for $g \in H^{1/2}_{\#}$ that
\begin{equation}\label{C2bd}
\| \left(\Lambda_{q_0}-\Lambda_{q_k}\right)g\|^2_{ H^{-1/2}_{\#}}  \leq C |b_0-b_k|^2 \| g\|^2_{H^{1/2}_{\#}}\leq \frac{C}{k^2}  \| g\|^2_{ H^{1/2}_{\#}},
\end{equation}	
where $C$ is a constant independent of $k$ and $g$.

If $g(\theta ) = \sum_{n \in Z}g_ne^{in\theta}$, by (\ref{c0}) and (\ref{c0_b}) we have
$$\| \left(\Lambda_{q_0}-\Lambda_{q_k}\right)g\|^2_{ H^{-1/2}_{\#}}   \leq     \left| \frac{b_kJ_1(b_k)}{b_kJ_1(b_k)\log b_k+J_0(b_k)} - \frac{b_0J_1(b_0)}{b_0J_1(b_0)\log b_0+J_0(b_0)}\right|^2  \left| g_0
	\right|^2$$
	$$+ \sum_{n=1}^\infty  \left|   \frac{J_{n-1}(b_k)-b_k^{2n}J_{n+1}(b_k)}{J_{n-1}(b_k)+b_k^{2n}J_{n+1}(b_k)}  -  \frac{J_{n-1}(b_0)-b_0^{2n}J_{n+1}(b_0)}{J_{n-1}(b_0)+b_0^{2n}J_{n+1}(b_0)}    \right|^2 (1+n^2)^{1/2}\left(  \left|g_n   \right|^2 +\left|g_{-n}  \right|^2  \right) 
	$$ $$=I_0^2\left| g_0\right|^2+ \sum_{n=1}^\infty  I_n^2 (1+n^2)^{1/2}  \left(  \left|g_n   \right|^2 +\left|g_{-n}  \right|^2  \right)      .$$
We start by estimating $I_0$.

From (\ref{estimacion_inferior}), (\ref{estimacion_resto_bessel}) and (\ref{bessel_orden_0}) $J_1(r) \leq \frac{r}{2}$ when $r \in (0,1)$ and  $$   rJ_1(r) \log r +J_0(r) \geq  \frac{r^2 \log r}{2}+1- \frac{r^2}{4}, \hspace{0.5cm} r \in (0,1).$$
Since $\frac{r^2 \log r}{2}+1- \frac{r^2}{4}$  it is a decreasing function in $(0,1)$ we have

\begin{equation}\label{decreciente}
 rJ_1(r) \log r +J_0(r) \geq \frac{3}{4}, \hspace{0.5cm} r \in (0,1).
 \end{equation} 
A simple calculation and this inequality gives us 
$$I_0 \lesssim b_kb_0J_1(b_k)J_1(b_0) \left| \log b_k - \log b_0 \right| +J_1 (b_k)J_0(b_0 ) \left| b_k-b_0 \right|$$ $$+b_0J_0(b_0) \left| J_1(b_k)-J_1(b_0)  \right|+b_0J_1(b_k) \left| J_0(b_k)-J_0(b_0)  \right|,$$
where the symbol $\lesssim$ denotes that the left hand side is bounded by a constant times the right hand one. Thus, combining the mean value theorem, the identity $J'_0(r)=-J_1(r)$, the fact that $b_k, b_0 \in (0,1)$  and (\ref{estimacion_inferior}), we easily get
\begin{equation}\label{I_0}
I_0 \lesssim \left| b_k-b_0\right|.
\end{equation}
Now we deal with $I_k, \; k=1,2, \cdot \cdot \cdot$. We will  use  the mean value Theorem, $b_k, b_0 \in (0,1)$, $\left| b_k^{2n}-b_0^{2n} \right| \lesssim \frac{\left| b_k-b_0\right|}{n}$, (\ref{estimacion_inferior}) and (\ref{estimacion_derivada_bessel}) to obtain
$$I_n \lesssim \frac{J_{n+1}(b_k)J_{n-1}(b_0)\left|b_k^{2n}-b_0^{2n}\right|
 +b_0^{2n}J_{n-1}(b_0) \left| J_{n+1}(b_k)-J_{n+1}(b_0)   \right|         }{J_{n-1}(b_k)J_{n-1}(b_0)} $$
$$+ \frac{  b_k^{2n}
 J_{n+1}(b_0) \left| J_{n-1}(b_k)-J_{n-1}(b_0)   \right|          }{J_{n-1}(b_k)J_{n-1}(b_0)}\lesssim \frac{b_k-b_0}{n} \leq b_k-b_0. $$
From this estimate and (\ref{I_0}) we have (\ref{C2bd}).

\begin{remark}  Theorem \ref{th_st1} can be extended to the case that $q_0$ is null. In this case we take in (\ref{b_k}) $k(b_0)=0$ and from (\ref{potencial_nulo}) 
$$\| \left(\Lambda_{q_0}-\Lambda_{q_k}\right)g\|^2_{ H^{-1/2}_{\#}}   \leq   \left|\frac{b_kJ_1(b_k)}{b_kJ_1(b_k)\log b_k+J_0(b_k)}\right|^2 \left| g_0
	\right|^2$$
	$$+ \sum_{n=1}^\infty  \left|  1- \frac{J_{n-1}(b_k)-b_k^{2n}J_{n+1}(b_k)}{J_{n-1}(b_k)+b_k^{2n}J_{n+1}(b_k)}   \right|^2 (1+n^2)^{1/2}\left(  \left|g_n   \right|^2 +\left|g_{-n}^s   \right|^2  \right) ,$$
	by using $J_1(r) \leq \frac{r}{2}$ $r \in (0,1)$, $b_k \in (0,1)$,  (\ref{decreciente}) and (\ref{estimacion_inferior})
	$$ \lesssim b_k^4 \left|g_0\right|^2+ \sum_{n=1}^\infty    \frac{b_k^{4n}J_{n+1}^2(b_k)}{J_{n-1}^2(b_k)}   (1+n^2)^{1/2}\left(  \left|g_n   \right|^2 +\left|g_{-n}^s   \right|^2  \right) ,$$
$$ \lesssim b_k^4 \left|g_0\right|^2+ \sum_{n=1}^\infty    \frac{b_k^{2n+4}}{n(n+1)} (1+n^2)^{1/2}\left(  \left|g_n   \right|^2 +\left|g_{-n}^s   \right|^2  \right)  \lesssim  \frac{1}{k^4}  \| g\|^2_{ H^{1/2}_{\#}}.$$
\end{remark}

\end{proof}

\begin{proof} (of Theorem \ref{th 0.1})

Let $q_1(x)=\gamma_1 \chi_{B(0,b_1)}(x), \; q_2(x)=\gamma_2 \chi_{B(0,b_2)}(x)$ in  $
F_b$ and $g(\theta)=\frac{1}{2^{1/4}}e^{i\theta}$.

$$
\| \Lambda_{q_1}-\Lambda_{q_2} \|^2_{\mathcal{L}(H^{1/2}_{\#}; H^{-1/2}_{\#})} 
 \geq \| \left(\Lambda_{q_1}-\Lambda_{q_2} \right)g \|_{H^{-1/2}_{\#} }^2 
$$
\begin{equation}\label{regina_1}	
	 = \left|  \frac{J_{0}(b_1\sqrt{\gamma_1})-b_1^{2}J_{2}(b_1\sqrt{\gamma_1})}{J_{0}(b_1\sqrt{\gamma_1})+b_1^{2}J_{2}(b_1\sqrt{\gamma_1})}- \frac{J_{0}(b_2\sqrt{\gamma_2})-b_2^{2}J_{2}(b_2\sqrt{\gamma_2})}{J_{0}(b_2\sqrt{\gamma_2})+b_2^{2}J_{2}(b_2\sqrt{\gamma})}   \right|^2  
	 \end{equation}
$$
\geq \frac{4\textbf{II}^2}{\left(1+\frac{b_1^4 \gamma_1}{8}     \right)^2\left(1+\frac{b_2^4 \gamma_2}{8} \right)^2},
$$
where
$$\textbf{II}=\left| b_2^2 J_0(b_1 \sqrt{\gamma_1})  J_2(b_2 \sqrt{\gamma_2})- b_1^2 J_0(b_2 \sqrt{\gamma_2})  J_2(b_1 \sqrt{\gamma_1})     \right|,$$ and we have used (\ref{estimacion_inferior}) for $n=0,2$. On the other hand, 
\begin{equation}\label{tres}
\textbf{II}\geq \frac{1}{8}\left| b_2^4\gamma_2-  b_1^4\gamma_1     \right|-\textbf{J}_1 -\textbf{J}_2-\textbf{J}_3,
\end{equation}
where
\begin{equation}\label{jota_1}
\textbf{J}_1=\left| b_2^2S_2(b_2 \sqrt{\gamma_2}) - b_1^2S_2(b_1 \sqrt{\gamma_1})     \right|,
\end{equation}
\begin{equation}\label{jota_2}
\textbf{J}_2=\frac{1}{8}\left|   b_2^4\gamma_2S_0(b_1 \sqrt{\gamma_1}) -b_1^4\gamma_1S_0(b_2 \sqrt{\gamma_2})    \right|,
\end{equation}
and
\begin{equation}\label{jota_3}
\textbf{J}_3=\left|   b_2^2S_0(b_1 \sqrt{\gamma_1}) S_2(b_2 \sqrt{\gamma_2}) -  b_1^2S_0(b_2 \sqrt{\gamma_2}) S_2(b_{1} \sqrt{\gamma_{1}})     \right|.
\end{equation}

To estimate  $\textbf{J}_i$, $i=1,2,3$, we use  (\ref{estimacion_inferior}),  (\ref{bessel_orden_0}), (\ref{bessel_orden_2}) and  Lemma     \ref{a}. We get

\begin{equation}\label{jota_1b}
\textbf{J}_1 \leq \frac{b_2^4 \gamma_2^2\left| b_1^2-b_2^2  \right|}{30 \pi} +\frac{b_1^2\left(b_2^2 \gamma_2+b_1^2 \gamma_1 \right)\left| b_2^2 \gamma_2 - b_1^2 \gamma_1   \right|}{96}.
\end{equation}

\begin{equation}\label{jota_2b}
\textbf{J}_2 \leq \frac{b_1^2 \gamma_1\left| b_2^4 \gamma_2-b_1^4 \gamma_1   \right|}{32} +\frac{b_1^4 \gamma_1 \left|  b_2^2 \gamma_2-  b_1^2 \gamma_1 \right|}{32}.
\end{equation}

\begin{equation}\label{jota_3b}
\textbf{J}_3 \leq  \frac{b_1^2b_2^4\gamma_1 \gamma_2^2 \left| b_2^2-b_1^2   \right|}{120 \pi} +\frac{b_1^6 \gamma_1^2 \left| b_1^2 \gamma_1-  b_2^2 \gamma_2 \right|}{36 \pi^\frac{3}{2}}   +\frac{b_1^4b_2^4 \gamma_1\gamma_2 \left| b_1^2 \gamma_1-  b_2^2 \gamma_2 \right|}{36 \pi^\frac{3}{2}}   
\end{equation}
$$+\frac{b_1^2b_2^4 \gamma_1\gamma_2^2 \left| b_1^2 \gamma_1-  b_2^2 \gamma_2 \right|}{480 \pi^\frac{3}{2}}    .$$

\begin{description}

\item \underline{Proof of (\ref{estimacion_estabilidad})}.

      We  suppose  that  $b_1=b_2=b>0$. We obtain
$$
\begin{array}{lll}
\textbf{J}_1 \leq \frac{b^6}{96}\left| \gamma_1 -\gamma_2  \right| \leq 0.01041b^4 \| q_1-q_2  \|_{L^\infty (B(0,1)}, \vspace{0.2cm} \\ \textbf{J}_2 \leq \left( \frac{b^6}{32}+ \frac{b^6}{32}    \right)\left| \gamma_1 -\gamma_2  \right| \leq 0.0625 b^4 \| q_1-q_2  \|_{L^\infty (B(0,1)}, \vspace{0.2cm} \\ \textbf{J}_3 \leq  \left( \frac{b^8}{36 \pi^{\frac{3}{2}}} +\frac{b^{10}}{36 \pi^{\frac{3}{2}}}+\frac{b^8}{480 \pi^{\frac{3}{2}}} \right)\left| \gamma_1 -\gamma_2  \right|  \leq 0.01004 b^4 \| q_1-q_2  \|_{L^\infty (B(0,1))},
\end{array}
$$
and from (\ref{regina_1}) and (\ref{tres})
$$\| \Lambda_{q_1}-\Lambda_{q_2} \|_{\mathcal{L}(H^{1/2}_{\#}; H^{-1/2}_{\#})} \geq 0.04205 b^4 \| q_1-q_2  \|_{L^\infty },$$ that implies (\ref{estimacion_estabilidad}).

\item \underline{Proof of (\ref{estimacion_estabilidad_2})}.

Now $\gamma_1 = \gamma_2$. Let us define $$M(\gamma , b_1,b_2)=\gamma \left( b_1^3+b_1^2b_2+b_1b_2^2+b_2^3 \right).$$
It is easy to check that
$$
\begin{array}{llll}
\frac{1}{8}\left| b_2^4 \gamma_2 -b_1^4 \gamma_1   \right|=\frac{1}{8}M(\gamma , b_1,b_2) \left| b_2-b_1 \right|, \vspace{0.2cm} \\ \textbf{J}_1 \leq \left( \frac{1}{30 \pi}+ \frac{1}{9  \pi^{\frac{3}{2}}}   \right)M(\gamma , b_1,b_2) \left| b_2-b_1 \right|, \vspace{0.2cm} \\  \textbf{J}_2 \leq \left( \frac{1}{32}+ \frac{1}{256  \pi^{\frac{1}{2}}}   \right)M(\gamma , b_1,b_2) \left| b_2-b_1 \right|, \vspace{0.2cm} \\  \textbf{J}_3 \leq \left( \frac{1}{120}+ \frac{1}{18 \pi^{\frac{3}{2}}} + \frac{1}{420 \pi^{\frac{3}{2}}}   \right)M(\gamma , b_1,b_2) \left| b_2-b_1 \right|,
\end{array}
$$
therefore,
$$\| \Lambda_{q_1}-\Lambda_{q_2} \|_{\mathcal{L}(H^{1/2}_{\#}; H^{-1/2}_{\#})} \geq  \frac{2 }{\left(  1+\frac{1}{8} \right)^2}\left( \frac{\gamma}{8}\left|  b_1^4 - b_2^4    \right|   -\textbf{J}_1-\textbf{J}_2-\textbf{J}_3 \right)$$
$$\geq\frac{0.04216}{\left(  1+\frac{1}{8} \right)^2}M(\gamma , b_1,b_2) \left| b_2-b_1 \right|.$$ 
and we obtain (\ref{estimacion_estabilidad_2})

\end{description}

\end{proof}

\end{document}